\numberwithin{equation}{section} 
\newcounter{cont}[section] 
\newtheorem{thm}[cont]{Theorem}
\newtheorem{prop}[cont]{Proposition}
\newtheorem{lem}[cont]{Lemma}
\theoremstyle{definition}
\newtheorem{defn}[cont]{Definition}
\newcommand{\y}{\textbf y}
\newcommand{\x}{\textbf x}
\title{\textbf{Slow motion for a hyperbolic variation of Allen-Cahn equation in one space dimension}}
\author{RAFFAELE FOLINO \\\footnotesize{Dipartimento di Ingegneria e Scienza dell'Informazione e Matematica}\\\footnotesize{Universit\`a degli Studi dell'Aquila}} 
\date{}
\begin{document}
\maketitle

\begin{abstract}
The aim of this paper is to prove that, for specific initial data $(u_0, u_1)$ and with homogeneous Neumann boundary conditions, the solution of the IBVP for a hyperbolic variation of Allen-Cahn equation on the interval $[a,b]$ shares the well-known dynamical metastability valid for the classical parabolic case. In particular, using the ``energy approach'' proposed by Bronsard and Kohn \cite{Bron-Kohn}, if $\varepsilon\ll1$ is the diffusion coefficient, we show that in a time scale of order $\varepsilon^{-k}$ nothing happens and the solution maintains the same number of transitions of its initial datum $u_0$. The novelty consists mainly in the role of the initial velocity $u_1$, which may create or eliminate transitions in later times. Numerical experiments are also provided in the particular case of the Allen-Cahn equation with relaxation.
\end{abstract}

\section{Introduction}\label{intro}
In this paper, we study the initial boundary value problem for the hyperbolic Allen-Cahn equation
\begin{equation}\label{Cauchy problem}
\begin{cases}
\tau u_{tt}+g(u)u_t=\varepsilon^2u_{xx}+f(u) \quad \qquad & x\in[a,b], \, t>0,\\
u(x,0)=u_0(x)  & x\in[a,b],\\
u_t(x,0)=u_1(x) & x\in[a,b],\\
u_x(a,t)=u_x(b,t)=0 & t>0,
\end{cases}
\end{equation} 
where $\tau$ and $\varepsilon$ are positive constants, $g$ is a strictly positive function and $f(u)=-F'(u)$, with $F$ a double well potential with wells of equal depth. We are interested in the limiting behavior of the solutions $u^\varepsilon$ as $\varepsilon\rightarrow0$ when $u_0$ has a \emph{transition layer structure} and $u_1$ is \emph{sufficiently} small (see conditions \eqref{u^eps tende a v}-\eqref{energia iniziale<}).\par
A particular case of \eqref{Cauchy problem} is the initial boundary value problem for the Allen-Cahn equation with relaxation 
\begin{equation}\label{allen-cahn-iper-g=1-tau f'}
\tau u_{tt}+(1-\tau f'(u))u_t=\varepsilon^2u_{xx}+f(u).
\end{equation}
Equation \eqref{allen-cahn-iper-g=1-tau f'} has both physical and probabilistic interpretation. Taking the limit as $\tau\rightarrow0$ in \eqref{allen-cahn-iper-g=1-tau f'}, we formally obtain the Allen-Cahn equation 
\begin{equation}\label{Allen-Cahn}
u_t=\varepsilon^2u_{xx}+f(u).
\end{equation}
From the physical point of view, this equation is the result of the classical theory of heat propagation by conduction, formulated by Joseph Fourier. It produces the same problem of the linear heat equation, that is the infinite speed of propagation. To avoid this unphysical property, in 1948 Cattaneo \cite{Cat} proposed the following modified equation for the heat flux $v$
\begin{equation}\label{Cattaneo law}
\tau v_t+v=-\varepsilon^2 u_x,
\end{equation} 
where $\tau>0$ is a relaxation time. Using the Cattaneo's law \eqref{Cattaneo law} instead of the Fourier's law, we obtain the \emph{semilinear Cattaneo system}
\begin{align}
u_t+v_x&=f(u) \label{semilinear Cattaneo1},\\
\tau v_t+\varepsilon^2u_x&=-v \label{semilinear Cattaneo2}.
\end{align} 
By differentiating equation \eqref{semilinear Cattaneo1} with respect to $t$ and differentiating \eqref{semilinear Cattaneo2} with respect to $x$, we obtain the equation \eqref{allen-cahn-iper-g=1-tau f'}. The detailed derivation of equation \eqref{allen-cahn-iper-g=1-tau f'} to describe heat propagation with finite speed can be found in Hadeler \cite{Hadeler95} and Hillen \cite{Hillen98}. Equation \eqref{allen-cahn-iper-g=1-tau f'} appears also in the description of a \emph{correlated random walk} (see Taylor \cite{Tay}, Goldstein \cite{Gol}, Kac \cite{Kac}, Zauderer \cite{Zau} and Holmes \cite{Holmes}). Existence and nonlinear stability of traveling fronts is provided in Lattanzio et al. \cite{LMPS}.\par
The asymptotic behavior of solutions of \eqref{Allen-Cahn} as $t\rightarrow \infty$ is well understood (see Matano \cite{Matano}): any solution $u(x,t)$ tends to a stationary solution as $t\rightarrow\infty$. However, in some cases, the convergence is exceedingly slow: the evolution is so slow that solutions (not stationary) \emph{appear} to be stable. This is an example of \emph{dynamical metastability}. The aim of this paper is to show that dynamical metastability is also present in the case of problem \eqref{Cauchy problem}. Before stating the main result of this article, let us do a short historical review.\par
Metastability for Allen-Cahn equation \eqref{Allen-Cahn} has been studied by several authors. There are at least two approaches to study dynamical metastability for Allen-Cahn equation. The \emph{dynamical approach} is due to Carr and Pego \cite{Carr-Pego} and Fusco and Hale \cite{Fusco-Hale}. They construct a $N$-dimensional manifold $\mathcal{M}$ consisting of functions which approximate metastable states with $N$ transition layers. If the initial datum is in a small neighborhood of $\mathcal{M}$, then the solution remains near $\mathcal{M}$ for a time proportional to $e^{C/\varepsilon}$. Based on these ideas slow motion results have been proved for several other equations and situations: for example Alikakos et al. \cite{Alikakos-et-al}, as well as Bates and Xun \cite{Bates-Xun1, Bates-Xun2}, consider the one-dimensional Cahn-Hilliard equation, Mascia and Strani \cite{Mascia-Strani} propose a general framework suited for parabolic equations in one dimensional bounded domains and apply such approach to scalar viscous conservation laws. \par
On the other hand, Bronsard and Kohn \cite{Bron-Kohn} proposed a different approach, based on the underlying variational structure of the equation and known as \emph{energy approach}. Using standard energy estimates, they showed that if the initial datum $u_0$ has a ``transition layer structure'', i.e. $u_0\approx\pm1$ except near finitely many transition points, then the solution maintains this structure and the transition points move slower than any power of $\varepsilon$. Grant \cite{Grant}, imposing stronger conditions on the initial datum, improved their method to obtain the exponential upper bound $O(e^{-C/\varepsilon})$ for the speed of the transition points. The energy approach has been applied by Bronsard and Hilhorst \cite{Bron-Hilh} to the one-dimensional Cahn-Hilliard equation, by Grant \cite{Grant} to prove exponentially slow motion for Cahn-Morral systems, by Kalies et al. \cite{Kal-VdV-Wan} to study slow motion of high-order systems and by Otto and Reznikoff \cite{Otto-Rez} to general gradient flows. \par
Each of these methods has its advantages and drawbacks. The dynamical approach gives the exact order of the speed of slow motion, but the proofs are complicated and lenghty. The energy approach is fairly simple and provides a rather clear and intuitive explanation for slow motion, but it gives only an upper bound for the speed.\par
In this paper, we show that the energy approach of Bronsard and Kohn can be adapted for the hyperbolic Allen-Cahn equation to obtain the above sketched results. The key point hinges on the use of the modified energy functional
$$E_\varepsilon[u^\varepsilon,u_t^\varepsilon](t):=\int_a^b\left[\frac\tau{2\varepsilon} u^\varepsilon_t(x,t)^2+\frac\varepsilon2 u^\varepsilon_x(x,t)^2+\frac{F(u^\varepsilon(x,t))}\varepsilon\right]dx,$$
with $-F'(u)=f(u)$, for which the equality
\begin{equation}\label{energy variation}
\varepsilon^{-1}\int_0^T\int_a^b g(u^\varepsilon)(u^\varepsilon_t)^2 dxdt=E_\varepsilon[u^\varepsilon,u^\varepsilon_t](0)-E_\varepsilon[u^\varepsilon,u^\varepsilon_t](T)
\end{equation}
holds. The proof of \eqref{energy variation} is in Appendix \ref{exist-uniq} (see Proposition \ref{prop-var-ener}). If $g(u)\geq \sigma>0$ for any $u\in\mathbb{R}$, it follows that
\begin{equation}\label{energy dissipation}
E_\varepsilon[u^\varepsilon,u^\varepsilon_t](0)-E_\varepsilon[u^\varepsilon,u^\varepsilon_t](T)\geq \sigma\varepsilon^{-1}\int_0^T\int_a^b{(u^\varepsilon_t)}^2dxdt
\end{equation}
and so $E_\varepsilon$ is a decreasing function on $t$ along the solutions of \eqref{Cauchy problem}. Thanks to inequality \eqref{energy dissipation}, we can use the energy approach of Bronsard and Kohn.\par
We suppose that $g$ is locally Lipschitz continuous on $\mathbb{R}$ and there exists a constant $\sigma>0$ such that
\begin{equation}\label{g strettamente positiva}
g(s)\geq \sigma, \qquad \qquad \forall\, s\in\mathbb{R}. 
\end{equation}
Regarding $f$, we suppose that $f(u)=-F'(u)$ with $F$ satisfying
\begin{equation}\label{hp-F-1}
F\in C^3(\mathbb{R}) \quad \mbox{and} \quad F(u)\rightarrow+\infty \;\mbox{ as } \;|u|\rightarrow\infty;
\end{equation}
\begin{equation}\label{hp-F-2}
F(\pm1)=F'(\pm1)=0, \quad F''(-1)>0 \quad \mbox{and} \quad F''(1)>0.
\end{equation}
Finally, we suppose that
\begin{equation}\label{hp-F-3}
\left\{u\in\mathbb{R}: F'(u)=0, F(u)\leq 0\right\}=\left\{-1,1\right\}.
\end{equation}
In other words, $F$ is a smooth, nonnegative function with global minimum equal to $0$ reached only at $u=-1$ and $u=1$. We call $F$ a \emph{bistable} potential with both wells of equal depth, because both $u=-1$ and $u=1$ are stable equilibria of the ordinary differential equation $u_t=-F'(u)$ and $F(-1)=F(1)$. The simplest example of a function $F$ satisfying assumptions \eqref{hp-F-1}, \eqref{hp-F-2}, \eqref{hp-F-3} is $F(u)=\frac14(u^2-1)^2$. Note however that $F$ is allowed to have positive critical values, including local minima. \par   
For the assumptions \eqref{g strettamente positiva}-\eqref{hp-F-3}, there exist travelling wave solutions of the hyperbolic Allen-Cahn equation, $u(x,t)=\phi(x-ct)$, connecting the stable equilibria $u=-1$ and $u=1$ if and only if $c=0$. Indeed, since $g$ is strictly positive, the sign of $c$ depends on the sign of the integral $\int_{-1}^1 f(s)ds$ that is equal to $0$ for \eqref{hp-F-2}. It follows that the profiles $\phi$ are independent of time and are solutions of the boundary value problem
$$\begin{cases}
\varepsilon^2 \phi''(y)+f(\phi(y))=0\,, \qquad y\in\mathbb{R},\\
\phi(-\infty)=-1\,, \quad \phi(+\infty)=1,
\end{cases}$$
that is the same of the classic Allen-Cahn equation \eqref{Allen-Cahn}.\par
The existence of travelling wave solutions with speed $c=0$, connecting the stable equilibria $u=-1$ and $u=1$, suggests that, if $u_0$ has a ``transition layer structure'' and $u_1=0$, layer migrations will be slow. Using energy methods introduced by \mbox{Bronsard and Kohn \cite{Bron-Kohn}} to study metastability of solutions of ${u_t=\varepsilon^2u_{xx}+u-u^3}$, we show that, under certain hypotheses on initial data $u_0$ and $u_1$, the solution of the initial boundary value problem \eqref{Cauchy problem} maintains its transition layer structure and the transition points move slower than any power of $\varepsilon$. \par
Let $a<y_1<y_2<b$, we have that
$$\int_{y_1}^{y_2}\left[\frac\varepsilon2 u_x^2+\frac{F(u)}\varepsilon\right]dx\geq\sqrt2\int_{y_1}^{y_2} u_x\sqrt{F(u)}dx= \sqrt2\int_{u(y_1)}^{u(y_2)}\sqrt{F(s)}ds.$$
If we assume that $u(y_1)=-1$ and $u(y_2)=1$, we see that the energy of a transition between $-1$ and $+1$ is greater than or equal to
\begin{equation}\label{c0}
c_0:=\sqrt{2}\int_{-1}^1 \sqrt{F(s)}ds.
\end{equation} 
We fix for the remainder of this section an integer $N\geq1$ and a piecewise constant function $v:(a,b)\rightarrow\{-1,+1\}$ with exactly $N$ discontinuities. We assume that the initial data $u_0$, $u_1$ depend on $\varepsilon$ and 
\begin{equation}\label{u^eps tende a v}
\lim_{\varepsilon\rightarrow 0} \|u_0^\varepsilon-v\|_{L^1}=0.
\end{equation}
In addition, we suppose that there exists $C>0$ such that for all sufficiently \mbox{small $\varepsilon$}
\begin{equation}\label{energia iniziale<}
E_\varepsilon[u^\varepsilon_0, u_1^\varepsilon]\leq Nc_0+C\varepsilon^k,
\end{equation}
with $c_0$ as in \eqref{c0} and $k$ a positive integer. \par
The condition \eqref{u^eps tende a v} fixes the number of transitions between $-1$ and $+1$ in the initial profile and their relative positions as $\varepsilon\rightarrow0$. If $u^\varepsilon_0$ makes these transitions then 
\begin{equation}\label{minimum energy N transitions}
\int_a^b\left[\frac\varepsilon2{(u^\varepsilon_0)}_x^2+\frac{F(u^\varepsilon_0)}\varepsilon\right]dx\geq Nc_0.
\end{equation}
More precisely, it can be shown (see Modica \cite{Modica}, Sternberg \cite{Sternberg}) that if $\{v^\varepsilon\}$ is a sequence that converges to $v$ in $L^1(a,b)$, then
$$\liminf_{\varepsilon\rightarrow0}\int_a^b\left[\frac\varepsilon2{(v^\varepsilon_x)}^2+\frac{F(v^\varepsilon)}\varepsilon\right]dx \geq Nc_0\,,$$
with equality if the sequence $\{v^\varepsilon\}$ is chosen properly. Therefore, the condition \eqref{energia iniziale<} demands that the energy at the time $t=0$ exceeds at most $C\varepsilon^k$ the minimum possible to have $N$ transitions. In particular, for \eqref{minimum energy N transitions} we have that
\begin{equation}\label{velocita iniziale piccola}
\int_a^b \frac{\tau}{2\varepsilon}{u^\varepsilon_1(x)}^2dx\leq C\varepsilon^k,
\end{equation}
\begin{equation}\label{hp-energia(u0)}
\int_a^b\left[\frac\varepsilon2{(u^\varepsilon_0)}_x^2+\frac{F(u^\varepsilon_0)}\varepsilon\right]dx \leq Nc_0+C\varepsilon^k\,.
\end{equation}
\begin{defn}
If the family $u^\varepsilon$ satisfies the conditions \eqref{u^eps tende a v} and \eqref{hp-energia(u0)}, we say that $u^\varepsilon$ has a \emph{transition layer structure of order $k$}.
\end{defn}
In other words, the hypotheses on initial data can be described as follows: the initial profile has a transition layer structure of order $k$ and the $L^2$-norm of the initial velocity is less than or equal to $C\varepsilon^\frac{k+1}2$. Under these hypotheses, we can say that nothing happens on a time scale of order $\varepsilon^{-k}$. This is the concern of our main result.
\begin{thm}\label{metastability-thm}
We consider the initial boundary value problem \eqref{Cauchy problem} with $F$ and $g$ satisfying \eqref{g strettamente positiva}-\eqref{hp-F-3}. We suppose that the initial data $u^\varepsilon_0, u^\varepsilon_1$ satisfy \eqref{u^eps tende a v} and \eqref{energia iniziale<} for some $k>0$. Then for any $m>0$
\begin{equation}\label{metastability-limit}
\sup_{0\leq\,t\,\leq m\varepsilon^{-k}}\|u^\varepsilon(\cdot,t)-v\|_{L^1}\xrightarrow[\varepsilon\rightarrow0]{}0.
\end{equation}
\end{thm}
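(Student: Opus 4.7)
The plan is to transplant the Bronsard--Kohn energy method to the hyperbolic setting, exploiting the modified functional $E_\varepsilon$ for which the dissipation identity \eqref{energy variation} holds. Writing $V_\varepsilon[u]:=\int_a^b\bigl[\tfrac{\varepsilon}{2}u_x^2+\varepsilon^{-1}F(u)\bigr]\,dx$ for the purely gradient--potential part, the monotonicity of $E_\varepsilon$ combined with \eqref{energia iniziale<} gives $V_\varepsilon[u^\varepsilon(t)]\leq E_\varepsilon[u^\varepsilon,u_t^\varepsilon](t)\leq E_\varepsilon[u_0^\varepsilon,u_1^\varepsilon]\leq Nc_0+C\varepsilon^k$ for all $t\geq 0$.

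By the triangle inequality and \eqref{u^eps tende a v}, the problem reduces to bounding $\|u^\varepsilon(\cdot,t)-u_0^\varepsilon\|_{L^1}$. Writing $u^\varepsilon(\cdot,t)-u_0^\varepsilon=\int_0^t u_s^\varepsilon\,ds$, Cauchy--Schwarz in space and time together with \eqref{energy dissipation} yield
\[
\|u^\varepsilon(\cdot,t)-u_0^\varepsilon\|_{L^1}^{\,2}\leq\frac{t(b-a)\varepsilon}{\sigma}\bigl(E_\varepsilon(0)-E_\varepsilon(t)\bigr).
\]
At $t=m\varepsilon^{-k}$ the prefactor is of order $\varepsilon^{1-k}$, so the mere bound $E_\varepsilon(t)\geq 0$ suffices only when $k<1$; for larger $k$ a uniform lower bound on $E_\varepsilon(t)$, and hence on $V_\varepsilon[u^\varepsilon(t)]$, is indispensable.

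To obtain such a lower bound I would prove, as the key lemma, a quantitative lower-semicontinuity of $V_\varepsilon$ at $v$: for every $\eta>0$ there exist $\rho(\eta),\varepsilon_0(\eta)>0$ such that $\|u-v\|_{L^1}\leq\rho$ and $\varepsilon<\varepsilon_0$ imply $V_\varepsilon[u]\geq Nc_0-\eta$. This follows from the Modica inequality $V_\varepsilon[u]\geq\mathrm{TV}\bigl(W(u)\bigr)$, with $W(s):=\int_0^s\sqrt{2F(\xi)}\,d\xi$, together with the $L^1$-lower semicontinuity of total variation and the identity $\mathrm{TV}(W(v))=Nc_0$. I would then close by a continuity/bootstrap device: fix $\delta>0$, choose $\eta$ small, set $\rho=\min(\rho(\eta),\delta)$, and define
\[
T_\varepsilon^{\ast}:=\sup\bigl\{T\in[0,m\varepsilon^{-k}]:\|u^\varepsilon(\cdot,s)-v\|_{L^1}\leq\rho\ \text{for all}\ s\in[0,T]\bigr\}.
\]
On $[0,T_\varepsilon^{\ast}]$ the lemma gives $E_\varepsilon(0)-E_\varepsilon(s)\leq\eta+C\varepsilon^k$, and inserting this into the Cauchy--Schwarz bound yields $\|u^\varepsilon(\cdot,T_\varepsilon^{\ast})-u_0^\varepsilon\|_{L^1}<\rho/2$ for all sufficiently small $\varepsilon$, which combined with $\|u_0^\varepsilon-v\|_{L^1}\to 0$ contradicts the maximality of $T_\varepsilon^{\ast}$ unless $T_\varepsilon^{\ast}=m\varepsilon^{-k}$.

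The main obstacle is the delicate quantitative balance in this bootstrap: the $\varepsilon^{1-k}$ prefactor from the long time scale must be dominated by $\eta+C\varepsilon^k$, so for large $k$ one must exploit the $\varepsilon^k$-budget of \eqref{energia iniziale<} together with a sufficiently sharp version of the lemma, quantifying the relationship between $\rho(\eta)$ and $\eta$ so that $\eta\,\varepsilon^{1-k}\to 0$. It is at precisely this step that the smallness of the initial velocity \eqref{velocita iniziale piccola} becomes crucial, since it keeps the kinetic contribution to $E_\varepsilon(0)$ within the same $\varepsilon^k$ scale as the potential excess and thereby preserves the hyperbolic analogue of the parabolic energy budget.
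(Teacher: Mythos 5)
Your overall skeleton (triangle inequality, $\|u^\varepsilon(\cdot,t)-u_0^\varepsilon\|_{L^1}\le\int_0^t\|u^\varepsilon_s\|_{L^1}ds$, Cauchy--Schwarz, and the dissipation relation \eqref{energy dissipation} to convert a lower bound on the energy at time $t$ into a bound on $\int_0^t\|u_s^\varepsilon\|_{L^2}^2ds$) is exactly the route the paper takes. The genuine gap is in your key lemma. You propose a qualitative lower semicontinuity statement: for every $\eta>0$ there is $\rho(\eta)$ such that $\|u-v\|_{L^1}\le\rho$ implies $V_\varepsilon[u]\ge Nc_0-\eta$, proved via the Modica inequality and $L^1$-lower semicontinuity of the total variation of $W(u)$. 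As you yourself note, this only yields $E_\varepsilon(0)-E_\varepsilon(t)\le\eta+C\varepsilon^k$, and against the prefactor $t(b-a)\varepsilon\sim\varepsilon^{1-k}$ the term $\eta\,\varepsilon^{1-k}$ does not vanish for $k\ge1$ unless $\eta=\eta(\varepsilon)=o(\varepsilon^{k-1})$ with $\rho(\eta)$ kept bounded away from zero. Lower semicontinuity of the total variation gives no rate at all, so no amount of ``quantifying the relationship between $\rho(\eta)$ and $\eta$'' can rescue this: the needed rate cannot come from $L^1$-closeness alone. What is actually required is precisely Proposition \ref{stima dal basso energia} of the paper: with a \emph{fixed} $\delta_k$ (independent of $\varepsilon$), the $L^1$-closeness \eqref{|w-v|_L^1<delta_l} \emph{together with the upper energy bound} \eqref{E_p(w)<Nc0+eps^l} forces the sharp lower bound $Nc_0-c_k\varepsilon^k$ in \eqref{E_p(w)>Nc0-eps^l}. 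Its proof is the real substance of the argument --- an induction producing points $x_j,y_j$ flanking each transition with $w(x_j)\le-1+C\varepsilon^{j/2}$, $w(y_j)\ge1-C\varepsilon^{j/2}$ as in \eqref{x_k,y_k}, where each improvement uses the $\varepsilon^k$-smallness of the energy excess, not just $L^1$-proximity --- and nothing in your proposal supplies it or a substitute for it.

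Two smaller remarks. First, the paper closes the time-continuation step slightly differently (and more cleanly than your $T_\varepsilon^\ast$ bootstrap): in Proposition \ref{norma_L2<Ceps^k+1} it tracks $\int_0^T\|u_t^\varepsilon\|_{L^1}dt\le\tfrac12\delta_k$ directly, which both guarantees the hypothesis of Proposition \ref{stima dal basso energia} at time $T$ and, via Cauchy--Schwarz, shows this condition persists up to times of order $\varepsilon^{-(k+1)}$, longer than the $m\varepsilon^{-k}$ window needed; your bootstrap is workable but only if the sharp lemma is available. Second, the closing claim that the smallness of the initial velocity \eqref{velocita iniziale piccola} ``becomes crucial'' at the balance step is a misreading: \eqref{velocita iniziale piccola} is a consequence of the single hypothesis \eqref{energia iniziale<}, and the proof uses only \eqref{energia iniziale<} plus monotonicity of $E_\varepsilon$; the kinetic term is controlled automatically because it is part of $E_\varepsilon$.
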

Note that the statement of Theorem \ref{metastability-thm} and the hypotheses on $u^\varepsilon_0$ are the same of the parabolic case (cfr. \cite[Theorem $4.1$]{Bron-Kohn}). The proof of Theorem \ref{metastability-thm} is in Section \ref{metastability}.\par 
We observe that the result \eqref{metastability-limit} holds for any strictly positive function $g$. In the relaxation case \eqref{allen-cahn-iper-g=1-tau f'}, we can state a more precise result on the asymptotic behavior of solutions as $t\rightarrow\infty$. Hillen \cite{Hillen} studied asymptotic behavior of solutions of system \eqref{semilinear Cattaneo1}-\eqref{semilinear Cattaneo2} in the interval $[a,b]$ with homogeneous Dirichlet or Neumann boundary conditions. Under appropriate assumptions on $f$, Hillen proved global existence of solutions and showed that each solution converges to a stationary solution as $t\rightarrow\infty$. If $u^\varepsilon_0$, $u^\varepsilon_1$ satisfy appropriate conditions and $\varepsilon$ is small, the solution $u^\varepsilon(x,t)$ tends to a stationary solution as $t\rightarrow\infty$, but the convergence is very slow: in a time scale of order $\varepsilon^{-k}$ the solution has a transition layer structure of order $k$. \par
The rest of this paper is organized as follows. Section \ref{metastability} contains the proof of the Theorem \ref{metastability-thm} and the result on the velocity of the transition points (cfr. Theorem \ref{Teorema-distanza interfacce}). In Section \ref{numerical example} we present some numerical examples in the relaxation case, i.e. $g(u)=1-\tau f'(u)$, to show slow motion of solutions of \eqref{allen-cahn-iper-g=1-tau f'} with $f(u)=u-u^3$. If the initial velocity $u_1$ is equal to $0$, there are no important differences with the parabolic case \eqref{Allen-Cahn}; it is interesting to study examples where $u_1$ is not constantly equal to $0$. We will exhibit an example where the initial profile $u_0$ is constant and thanks to the initial velocity $u_1$ a metastable state is formed. At the same way, starting from a profile $u_0$ having a transition layer structure, a transition can be eliminated choosing opportunely the initial velocity $u_1$. This is a fundamental difference with the Allen-Cahn equation \eqref{Allen-Cahn}, where the metastable state has $N$ transitions if and only if $u_0$ has $N$ transitions. Finally, in Appendix \ref{exist-uniq} we prove that the initial boundary value problem \eqref{Cauchy problem} is globally well-posed for positive times in the space $H^1([a,b])\times L^2(a,b)$. Using a classical semigroup argument, we show that there exists a unique \emph{mild solution} for any initial data $(u_0,u_1)\in H^1\times L^2$ and this solution satisfies the equality \eqref{energy variation} for any $T>0$. \par
In this paper, the attention is focused on the energy approach of Bronsard and Kohn for the scalar case of the hyperbolic Allen-Cahn equation. This approach can be refined to obtain persistence of the layered structure for time intervals of $O(e^{C/\varepsilon})$ and extended to the case when $u$ is vector-valued. These topics, as well as the study of metastability for hyperbolic Allen-Cahn equation with the dynamical approach of Carr and Pego, are addressed in \cite{Folino,FLM}. 

\section{Slow evolution}\label{metastability}
In this section we study metastability of solutions of the equation
\begin{equation}\label{allen-cahn-iper}
\tau u_{tt}+g(u)u_t=\varepsilon^2u_{xx}+f(u), \qquad \quad (x,t)\in [a,b]\times(0,T),
\end{equation}
with homogeneous Neumann boundary conditions
\begin{equation}\label{Neumann-bordo}
u_x(a,t)=u_x(b,t)=0 \qquad\quad \forall t>0
\end{equation}
and initial data 
\begin{equation}\label{cond-iniz}
u(x,0)=u_0(x), \qquad u_t(x,0)=u_1(x), \qquad \qquad x\in[a,b].
\end{equation}
The first step to prove Theorem \ref{metastability-thm} is to show a lower bound on the energy. The result is purely variational in character and it has been proved by Bronsard and Kohn \cite{Bron-Kohn} in the case $F(u)=\frac{u^2}2-\frac{u^4}4$. We extend the result to the case of a function $F$ that satisfies \eqref{hp-F-1}, \eqref{hp-F-2} and \eqref{hp-F-3}, adapting the approach in \cite{Bron-Kohn}.
\begin{prop}\label{stima dal basso energia}
Let $F:\mathbb{R}\rightarrow\mathbb{R}$ be a function satisfying \eqref{hp-F-1}, \eqref{hp-F-2}, \eqref{hp-F-3}, $v:(a,b)\rightarrow\{-1,+1\}$ a piecewise constant function with exactly $N$ discontinuities and let $l$ be a positive integer. Then there exist constants $\delta_l>0$ and $c_l>0$ such that if $w\in H^1$ satisfies
\begin{equation}\label{|w-v|_L^1<delta_l}
\|w-v\|_{L^1}\leq\delta_l
\end{equation}
and
\begin{equation}\label{E_p(w)<Nc0+eps^l}
\int_a^b\left[\frac\varepsilon2 w^2_x+\frac{F(w)}\varepsilon\right]dx\leq Nc_0+\varepsilon^l
\end{equation}
with $\varepsilon$ sufficiently small, then
\begin{equation}\label{E_p(w)>Nc0-eps^l}
\int_a^b\left[\frac\varepsilon2 w^2_x+\frac{F(w)}\varepsilon\right]dx\geq Nc_0-c_l\varepsilon^l.
\end{equation}
\end{prop}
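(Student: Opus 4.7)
The plan is to follow Bronsard--Kohn, adapted to the general $F$ allowed here. The core ingredient is the pointwise Modica--Mortola inequality
$$\frac{\varepsilon}{2}w_x^2 + \frac{F(w)}{\varepsilon} \geq \sqrt{2F(w)}\,|w_x|,$$
which upon integration over any sub-interval $[\alpha,\beta]\subset[a,b]$ yields $\int_\alpha^\beta\bigl(\frac{\varepsilon}{2}w_x^2+\frac{F(w)}{\varepsilon}\bigr)dx \geq |\Phi(w(\beta))-\Phi(w(\alpha))|$, where $\Phi(u):=\int_{-1}^u\sqrt{2F(s)}\,ds$ satisfies $\Phi(-1)=0$, $\Phi(1)=c_0$, and (by $F''(\pm 1)>0$) $|\Phi(u)-\Phi(\pm 1)| = O((u\mp 1)^2)$ for $u$ near $\pm 1$.

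Let $\xi_1<\cdots<\xi_N$ be the jumps of $v$, set $\xi_0:=a$, $\xi_{N+1}:=b$, and write $v_j:=v|_{I_j}$ on $I_j:=(\xi_j,\xi_{j+1})$. First I would locate, for each $j=0,\dots,N$, a point $y_j\in I_j$ with $|w(y_j)-v_j|\leq\theta$, for a small parameter $\theta>0$ to be chosen. Summing the integrated bound on each $[y_{j-1},y_j]$ (each straddling exactly one jump of $v$) gives
$$\int_a^b\Bigl[\frac{\varepsilon}{2}w_x^2+\frac{F(w)}{\varepsilon}\Bigr]dx \;\geq\; \sum_{j=1}^N\bigl|\Phi(w(y_j))-\Phi(w(y_{j-1}))\bigr| \;\geq\; N\bigl(c_0-C\theta^2\bigr),$$
so the target reduces to producing such $y_j$ with $\theta^2\lesssim\varepsilon^l$. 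A crude first choice uses the energy upper bound $\int F(w)\leq C\varepsilon$ together with $F(u)\geq c_F(u\mp 1)^2$ near the wells and $\|w-v\|_{L^1}\leq\delta_l$: Chebyshev then supplies $y_j\in I_j$ with $|w(y_j)-v_j|\leq C\sqrt{\varepsilon}$, provided $\delta_l$ is fixed sufficiently small (depending on $\eta_0$ and $\min_j|I_j|$). This already settles the case $l=1$.

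For general integer $l$ I would iterate. Once a lower bound $E_\varepsilon\geq Nc_0-CN\theta_n^2$ is in hand, the slack $E_\varepsilon-(Nc_0-CN\theta_n^2)\leq\varepsilon^l+CN\theta_n^2$ is absorbed by the non-negative excess energy on the boundary pieces $[a,y_0]$, $[y_N,b]$, and on plateau sub-intervals $[z_j,y_j]\subset I_j$ on which the Modica--Mortola bound between $y_{j-1}$ and $z_j$ already captures essentially the full $c_0-O(\theta_n^2)$ of the corresponding transition. On each such plateau piece this forces $\int F(w)\leq C\varepsilon(\varepsilon^l+\theta_n^2)$, and a second Chebyshev argument yields a refined point with $|w-v_j|\leq\theta_{n+1}$ satisfying $\theta_{n+1}^2 \leq C(\varepsilon^{l+1}+\varepsilon\theta_n^2)$. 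Starting from $\theta_1^2=O(\varepsilon)$ and iterating $l$ times gives $\theta_l^2=O(\varepsilon^l)$, hence the claimed bound $E_\varepsilon \geq Nc_0 - c_l\varepsilon^l$. The main obstacle will be the geometric bookkeeping in the iteration: at each step the relocated points must lie inside the preceding plateau sub-intervals while preserving that every transition sub-interval still straddles exactly one jump of $v$, and $\delta_l$ must be fixed a priori so that the $L^1$ Chebyshev term $\delta_l/\eta_0$ never consumes the margin at any of the $l$ stages.
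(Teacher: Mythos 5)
Your proposal is correct and follows essentially the same route as the paper: the Modica--Mortola lower bound via $\Psi$, a Chebyshev-type selection of points near the wells using the $L^1$ closeness and the smallness of the residual energy, and an $l$-step bootstrap whose recursion $\theta_{n+1}^2\lesssim \varepsilon^{l+1}+\varepsilon\theta_n^2$ is exactly the paper's inductive improvement $F(w(y_{k+1}))\leq C\varepsilon^{k+1}/\delta_l$, i.e.\ $|w(y_k)\mp 1|\leq C\varepsilon^{k/2}$. The bookkeeping issue you flag is resolved in the paper simply by letting the selection window widen by $2\delta_l$ at each stage and fixing $\delta_l$ a priori so that the intervals $(\gamma_i-2l\delta_l,\gamma_i+2l\delta_l)$ are disjoint and contained in $(a,b)$.
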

\begin{proof}
Firstly, we consider the case $N=1$. Let $\gamma$ be the point of discontinuity of $v$. We may assume that $v=-1$ on $(a,\gamma)$ (if necessary, replace $v$ and $w$ by $-v$ and $-w$). We choose $\delta_l$ sufficiently small so that
$$(\gamma-2l\delta_l,\gamma+2l\delta_l)\subset(a,b).$$
We show that from hypotheses \eqref{|w-v|_L^1<delta_l} and \eqref{E_p(w)<Nc0+eps^l} it follows that there exist two points $x_1{\in(\gamma-2\delta_l,\gamma)}$ and $y_1\in(\gamma,\gamma+2\delta_l)$ such that
\begin{equation}\label{x_1,y_1}
w(x_1)\leq-1+C\varepsilon^\frac12, \qquad w(y_1)\geq1-C\varepsilon^\frac12\,.
\end{equation}
Here and throughout, $C$ represents a positive constant that is independent of $\varepsilon$, whose value may change from line to line. From hypothesis \eqref{|w-v|_L^1<delta_l} we have
\begin{equation}\label{int su (gamma,b)}
\int_\gamma^b|w-1|\leq\delta_l.
\end{equation}
If we denote by $S^-:=\{y:w(y)\leq0\}$ and by $S^+:=\{y:w(y)>0\}$, then it follows from \eqref{int su (gamma,b)} that $\mbox{meas}(S^-\cap(\gamma,b))\leq\delta_l$ and hence that
$$\mbox{meas}(S^+\cap(\gamma,\gamma+2\delta_l))\geq\delta_l.$$
Now, from \eqref{E_p(w)<Nc0+eps^l}, we obtain
$$\int_{S^+\cap(\gamma,\gamma+2\delta_l)}\frac{F(w)}\varepsilon\, dx\leq c_0+1,$$
and therefore there exists $y_1\in S^+\cap(\gamma,\gamma+2\delta_l)$ such that
\begin{equation}\label{F(w(y1))<Ceps}
F(w(y_1))\leq C\varepsilon, \qquad \quad C=\frac{c_0+1}{\delta_l}.
\end{equation}
Since $w(y_1)>0$, if $\varepsilon$ is sufficiently small, it follows from \eqref{F(w(y1))<Ceps} that
\begin{equation}\label{solo y1}
w(y_1)\geq1-C_1\varepsilon^\frac12.
\end{equation}
Indeed, let $\alpha_0=F''(1)>0$ and we choose $\beta_0>0$ small enough so that
\begin{equation}\label{F''(w)}
\frac{\alpha_0}2\leq F''(w)\leq 2\alpha_0, \qquad \quad \forall\,w\in[1-\beta_0,1+\beta_0].
\end{equation}
Integrating \eqref{F''(w)} and using \eqref{hp-F-2} we have
\begin{equation}\label{F(w) compresa tra due parabole}
\frac{\alpha_0}4{(w-1)}^2\leq F(w)\leq\alpha_0{(w-1)}^2,
\end{equation}
for all $w\in[1-\beta_0,1+\beta_0]$. If $\varepsilon$ is sufficiently small so that \eqref{F(w(y1))<Ceps} implies $w(y_1)\in[1-\beta_0,1+\beta_0]$, then \eqref{solo y1} follows from \eqref{F(w(y1))<Ceps} and \eqref{F(w) compresa tra due parabole}. The existence of $x_1{\in S^-\cap(\gamma-2\delta_l,\gamma)}$ such that $w(x_1)\leq-1+C\varepsilon^\frac12$ is proved similarly.\par
Now, let us prove that from \eqref{x_1,y_1} follows \eqref{E_p(w)>Nc0-eps^l} with $l=1$. We have
\begin{equation}\label{stima-l=1}
\int_{x_1}^{y_1}\left[\frac\varepsilon2 w_x^2+\frac{F(w)}\varepsilon\right]dx\geq\sqrt2\int_{x_1}^{y_1} |w_x|\sqrt{F(w)}dx = \int_{x_1}^{y_1}\left|\frac{d}{dx}\Psi(w)\right|dx,
\end{equation}
where $\Psi'(x)=\sqrt{2F(x)}$. Note that $c_0$ in \eqref{c0} coincides with $\Psi(1)-\Psi(-1)$. Using the monotonicity of $\Psi$ and inequalities \eqref{F(w) compresa tra due parabole}, we conclude that
\begin{align}
\int_{x_1}^{y_1}\left|\frac{d}{dx}\Psi(w)\right|dx &\geq \Psi(w(y_1))-\Psi(w(x_1))\nonumber\\
&\geq c_0-\sqrt2\int_{1-C\varepsilon^\frac12}^1 \sqrt{F(s)} ds -\sqrt2\int_{-1}^{-1+C\varepsilon^\frac12}\sqrt{F(s)}ds \nonumber\\ &\geq c_0-c_1\varepsilon. \label{stima-caso-l=1 (2)}
\end{align}
This implies \eqref{E_p(w)>Nc0-eps^l} when $l=1$. \par
We argue inductively to prove the following assertions for $1\leq k\leq l$: there exist $x_k\in(\gamma-2k\delta_l,\gamma)$ and $y_k\in(\gamma,\gamma+2k\delta_l)$ such that
\begin{equation}\label{x_k,y_k}
w(x_k)\leq-1+C\varepsilon^\frac{k}2, \qquad w(y_k)\geq 1-C\varepsilon^\frac{k}2,
\end{equation}
with $C=C(k)$ independent of $\varepsilon$ and
\begin{equation}\label{E_p(w)-xk,yk}
\int_{x_k}^{y_k}\left[\frac\varepsilon2 w^2_x+\frac{F(w)}\varepsilon\right]dx\geq c_0-c_k\varepsilon^k.
\end{equation}
We have already completed the initial step, $k=1$. Let us show that for $k<l$, from \eqref{E_p(w)-xk,yk} it follows that there exist $x_{k+1}\in(x_k-2\delta_l,x_k)$ and $y_{k+1}\in(y_k,y_k+2\delta_l)$ such that
\begin{equation}\label{x_k+1,y_k+1}
w(x_{k+1})\leq-1+C\varepsilon^\frac{k+1}2, \qquad w(y_{k+1})\geq 1-C\varepsilon^\frac{k+1}2,
\end{equation}   
By \eqref{|w-v|_L^1<delta_l} we have
\begin{equation}\label{meas>delta_l-k}
\mbox{meas}(S^+\cap(y_k,y_k+2\delta_l))\geq\delta_l.
\end{equation}
Furthermore, from \eqref{E_p(w)-xk,yk} and \eqref{E_p(w)<Nc0+eps^l} it follows that
$$\int_{y_k}^b\frac{F(w)}\varepsilon dx\leq C\varepsilon^k,$$
and hence that
\begin{equation}\label{int F(w)<Ceps^k+1}
\int_{S^+\cap(y_k,y_k+2\delta_l)} F(w)\,dx\leq C\varepsilon^{k+1}.
\end{equation}
Thanks to \eqref{meas>delta_l-k} and \eqref{int F(w)<Ceps^k+1}, we can say that there exists $y_{k+1}\in S^+\cap(y_k,y_k+2\delta_l)$ such that
\begin{equation}\label{F(w(y_k+1))<Ceps^k+1}
F(w(y_{k+1}))\leq \frac{C}{\delta_l}\varepsilon^{k+1}.
\end{equation}
Arguing as for \eqref{solo y1}, using \eqref{F(w(y_k+1))<Ceps^k+1} and \eqref{F(w) compresa tra due parabole} we conclude the existence of $y_{k+1}\in(y_k,y_k+2\delta_l)$ with the desired property. Similarly, it can be proved the existence of $x_{k+1}\in(x_k-2\delta_l,x_k)$. To complete the induction argument we must show that if $k<l$, then \eqref{x_k+1,y_k+1} implies
$$\int_{x_{k+1}}^{y_{k+1}}\left[\frac\varepsilon2 w^2_x+\frac{F(w)}\varepsilon\right]dx\geq c_0-c_{k+1}\varepsilon^{k+1}.$$
 Arguing as \eqref{stima-l=1} and using \eqref{F(w) compresa tra due parabole}-\eqref{x_k+1,y_k+1} we have
\begin{align*}
\int_{x_{k+1}}^{y_{k+1}}\left[\frac\varepsilon2 w_x^2+\frac{F(w)}\varepsilon\right]dx &\geq \Psi(y_{k+1})-\Psi(x_{k+1}) \\
&\geq c_0\!-\!\sqrt2\left(\int_{1-C\varepsilon^\frac{k+1}2}^1\! \sqrt{F(s)} ds +\int_{-1}^{-1+C\varepsilon^\frac{k+1}2}\!\sqrt{F(s)}ds\right)\\
&\geq c_0-c_{k+1}\varepsilon^{k+1}.
\end{align*}
Since \eqref{E_p(w)-xk,yk} with $k=l$ implies \eqref{E_p(w)>Nc0-eps^l}, we have completed the proof in case $N=1$.\par
The preceding argument is fundamentally local in character, so it is readily adapted to the case $N>1$. Let $v$ have $N$ discontinuities at points $\gamma_1<\gamma_2<\ldots<\gamma_N$; for ease of notation we set $a=\gamma_0, \gamma_{N+1}=b$. We argue as in the case $N=1$ in each point of discontinuity $\gamma_i$. The constant $\delta_l$ must be sufficiently small so that 
$$\gamma_i+2l\delta_l<\gamma_{i+1}-2l\delta_l \qquad \quad 0\leq i\leq N.$$
We may assume without loss of generality that $v=-1$ on $(a,\gamma_i)$. Arguing as for \eqref{x_1,y_1} we obtain the existence of $x^i_1\in(\gamma_i-2\delta_l,\gamma_i)$ and $y^i_1\in(\gamma_i,\gamma_i+2\delta_l)$ such that
$$w(x^i_1)\approx (-1)^i,\qquad \quad w(y^i_1)\approx(-1)^{i+1},$$
$$F(w(x^i_1))\leq C\varepsilon, \qquad \quad F(w(y^i_1))\leq C\varepsilon.$$
On each interval $(x_1^i,y_1^i)$, we can estimate as in \eqref{stima-l=1}-\eqref{stima-caso-l=1 (2)}; adding these estimate gives
$$\sum_{i=1}^N\int_{x_1^i}^{y_1^i}\left[\frac\varepsilon2 w_x^2+\frac{F(w)}\varepsilon\right]dx\geq Nc_0-c_1\varepsilon$$
and so we have \eqref{E_p(w)>Nc0-eps^l} with $l=1$. Arguing inductively as done in case $N=1$, we obtain \eqref{E_p(w)>Nc0-eps^l} for the general case $l\geq2$.
\end{proof}
Proposition \ref{stima dal basso energia} is fundamental to prove Theorem \ref{metastability-thm}.
\begin{prop}\label{norma_L2<Ceps^k+1}
Assume that $F$ and $g$ satisfy \eqref{g strettamente positiva}-\eqref{hp-F-3} and that the initial data $u^\varepsilon_0$, $u^\varepsilon_1$ satisfy \eqref{u^eps tende a v} and \eqref{energia iniziale<} for some $k>0$. Let $u^\varepsilon$ be the solution of \eqref{allen-cahn-iper}-\eqref{Neumann-bordo}-\eqref{cond-iniz}. Then there exist positive constants $C_1,C_2$ (depending on $v$, $g$ and $k$, but not on $\varepsilon$) such that
\begin{equation}\label{norma-L2(u_t)<eps^k+1}
\int_0^{C_1\varepsilon^{-(k+1)}}\!\!{\|u^\varepsilon_t\|}^2_{L^2}\,dt\leq C_2\varepsilon^{k+1}
\end{equation}
for all sufficiently small $\varepsilon$.
\end{prop}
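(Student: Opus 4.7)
The strategy is to combine the energy dissipation identity \eqref{energy variation}--\eqref{energy dissipation} with the variational lower bound of Proposition \ref{stima dal basso energia} via a bootstrap argument. Since \eqref{energia iniziale<} already gives $E_\varepsilon[u^\varepsilon,u^\varepsilon_t](0)\le Nc_0+C\varepsilon^k$, it suffices to establish a matching lower bound $E_\varepsilon[u^\varepsilon,u^\varepsilon_t](T)\ge Nc_0-c\,\varepsilon^k$ on the time interval $[0,C_1\varepsilon^{-(k+1)}]$: subtracting and applying \eqref{energy dissipation} with $g\ge\sigma$ bounds the double integral $\varepsilon^{-1}\int\!\!\int(u^\varepsilon_t)^2$ by $C'\varepsilon^k$, which gives exactly the $O(\varepsilon^{k+1})$ estimate claimed in \eqref{norma-L2(u_t)<eps^k+1}.

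Proposition \ref{stima dal basso energia} at time $t$ is applicable only if $u^\varepsilon(\cdot,t)$ stays within $L^1$-distance $\delta_k$ of $v$. Fixing $\delta_k,c_k$ from that proposition with $l=k$, I define
\[
T^\ast:=\sup\bigl\{T\ge 0:\|u^\varepsilon(\cdot,t)-v\|_{L^1}\le\delta_k\ \text{for all }t\in[0,T]\bigr\}.
\]
For $t\in[0,T^\ast]$, the monotonicity of $E_\varepsilon$ (a consequence of \eqref{energy dissipation}) gives
\[
\int_a^b\!\Bigl[\tfrac\varepsilon2(u^\varepsilon_x)^2+F(u^\varepsilon)/\varepsilon\Bigr]dx\le E_\varepsilon(t)\le E_\varepsilon(0)\le Nc_0+C\varepsilon^k,
\]
so Proposition \ref{stima dal basso energia} (applied with $l=k$, after absorbing the constant $C$ into a new $c_k$) yields the spatial lower bound $\int_a^b[\tfrac\varepsilon2(u^\varepsilon_x)^2+F(u^\varepsilon)/\varepsilon]\,dx\ge Nc_0-c_k\varepsilon^k$. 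Since the kinetic term is non-negative, $E_\varepsilon(0)-E_\varepsilon(t)\le C'\varepsilon^k$, and \eqref{energy dissipation} becomes
\[
\sigma\varepsilon^{-1}\int_0^t\|u^\varepsilon_s\|_{L^2}^2\,ds\le C'\varepsilon^k,
\]
giving the desired bound on $[0,T^\ast]$ with $C_2:=C'/\sigma$.

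It remains to rule out $T^\ast<C_1\varepsilon^{-(k+1)}$ by continuation. Writing $u^\varepsilon(\cdot,t)-u^\varepsilon_0=\int_0^t u^\varepsilon_s\,ds$ and applying Cauchy--Schwarz in both space and time,
\[
\|u^\varepsilon(\cdot,t)-u^\varepsilon_0\|_{L^1}\le\sqrt{(b-a)\,t}\left(\int_0^t\|u^\varepsilon_s\|_{L^2}^2\,ds\right)^{1/2}\le\sqrt{(b-a)\,C_2\,t\,\varepsilon^{k+1}}.
\]
Choosing $C_1:=\delta_k^2/\bigl(16(b-a)C_2\bigr)$, the right-hand side is at most $\delta_k/4$ for every $t\le C_1\varepsilon^{-(k+1)}$. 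Combined with $\|u^\varepsilon_0-v\|_{L^1}\to 0$ from \eqref{u^eps tende a v}, this yields $\|u^\varepsilon(\cdot,T^\ast)-v\|_{L^1}\le\delta_k/2<\delta_k$ for $\varepsilon$ sufficiently small, contradicting the maximality of $T^\ast$ by the continuity of $t\mapsto\|u^\varepsilon(\cdot,t)-v\|_{L^1}$ that comes from the $H^1$-valued mild solution framework of Appendix \ref{exist-uniq}. The main technical obstacle is precisely this circular interdependence: Proposition \ref{stima dal basso energia} requires $L^1$-closeness to $v$, but that closeness is itself what must be propagated in time; the bootstrap resolves this by tuning $C_1$ so that closeness is preserved on the full window $[0,C_1\varepsilon^{-(k+1)}]$.
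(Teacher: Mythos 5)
Your proof is correct and follows essentially the same route as the paper: monotonicity of $E_\varepsilon$ plus \eqref{energia iniziale<} for the upper bound, Proposition \ref{stima dal basso energia} for the lower bound while the solution stays $L^1$-close to $v$, the dissipation inequality \eqref{energy dissipation} to get the $O(\varepsilon^{k+1})$ estimate, and Cauchy--Schwarz in space-time to propagate the closeness up to times of order $\varepsilon^{-(k+1)}$. The only difference is bookkeeping: the paper's stopping time is defined by saturating $\int_0^{T_1}\|u^\varepsilon_t\|_{L^1}\,dt=\tfrac12\delta_k$ (so no explicit continuity-of-the-$L^1$-distance argument is needed), whereas you stop at the first loss of $L^1$-closeness to $v$ and close the loop by contradiction, which is an equally valid variant.
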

\begin{proof}
By \eqref{u^eps tende a v}, we can say that for all sufficiently small $\varepsilon$ 
\begin{equation}\label{int|u^eps-v|<delta}
\|u^\varepsilon_0-v\|_{L^1}\leq\frac12\delta_k,
\end{equation}
with $\delta_k$ as in Proposition \ref{stima dal basso energia}. In the following, we consider only values of $\varepsilon$ for which \eqref{energia iniziale<} and \eqref{int|u^eps-v|<delta} hold.\par
Using Proposition \ref{stima dal basso energia}, we show that if $T=T(\varepsilon)>0$ satisfies
\begin{equation}\label{norma_L1-u_t^eps}
\int_0^{T(\varepsilon)}{\|u_t^\varepsilon\|}_{L^1}\,dt\leq\frac12\delta_k,
\end{equation}
then
\begin{equation}\label{E(T(eps))>Nc0-c_keps^k}
E_\varepsilon[u^\varepsilon,u^\varepsilon_t](T_\varepsilon)\geq Nc_0-c_k\varepsilon^k, 
\end{equation}
for some $c_k>0$. Let $w(x)=u^\varepsilon(x,T(\varepsilon))$. By \eqref{energy dissipation} it follows that $E_\varepsilon[u^\varepsilon,u^\varepsilon_t]$ decreases in time and hence, thanks to hypothesis \eqref{energia iniziale<}, we have 
$$E_\varepsilon[u^\varepsilon,u^\varepsilon_t](T_\varepsilon)\leq E_\varepsilon[u^\varepsilon,u^\varepsilon_t](0)\leq Nc_0+C\varepsilon^k.$$  
Therefore, $w$ satisfies condition \eqref{E_p(w)<Nc0+eps^l}. Furthermore, if \eqref{int|u^eps-v|<delta} and \eqref{norma_L1-u_t^eps} hold, then
$${\|v-w\|}_{L^1}\leq{\|v-u^\varepsilon_0\|}_{L^1}+{\|u^\varepsilon_0-w\|}_{L^1}\leq \frac12\delta_k+\frac12\delta_k=\delta_k.$$
Thus $w$ satisfies condition \eqref{|w-v|_L^1<delta_l} and, applying Proposition \ref{stima dal basso energia}, we obtain \eqref{E(T(eps))>Nc0-c_keps^k}.\par
Substitution of \eqref{energia iniziale<} and \eqref{E(T(eps))>Nc0-c_keps^k} in \eqref{energy dissipation} yields
\begin{equation}\label{risultato con T(eps)}
\int_0^{T(\varepsilon)} {\|u^\varepsilon_t\|}^2_{L^2}\,dt\leq C_2\varepsilon^{k+1},
\end{equation}
where $C_2=(C+c_k)/\sigma$. Hence to prove \eqref{norma-L2(u_t)<eps^k+1} we must simply show that \eqref{norma_L1-u_t^eps} holds with $T(\varepsilon)\geq C_1\varepsilon^{-(k+1)}$. If
$$\int_0^{+\infty}{\|u_t^\varepsilon\|}_{L^1}\,dt\leq\frac12\delta_k,$$
then there is nothing to prove; otherwise choose $T_1(\varepsilon)$ such that
$$\int_0^{T_1(\varepsilon)}{\|u_t^\varepsilon\|}_{L^1}\,dt=\frac12\delta_k.$$
Using Cauchy-Schwarz inequality and \eqref{risultato con T(eps)}, we obtain
\begin{align*}
\frac12\delta_k &\leq {((b-a)T_1(\varepsilon))}^{\frac12}{\left(\int_0^{T_1(\varepsilon)}{\|u_t^\varepsilon\|}_{L^2}^2\,dt\right)}^{\frac12}\\
&\leq {(C_2(b-a)T_1(\varepsilon))}^{\frac12}\varepsilon^{\frac{k+1}2}\,,
\end{align*}
so 
$$T_1(\varepsilon)\geq C_1\varepsilon^{-(k+1)}$$
and the proof is complete.
\end{proof}
Now, we can prove Theorem \ref{metastability-thm}.
\begin{proof}[Proof of Theorem \ref{metastability-thm}]
Let $m>0$. Triangle inequality gives:
\begin{equation}\label{dis-triangle}
\|u^\varepsilon(\cdot,t)-v\|_{L^1}\leq\|u^\varepsilon(\cdot,t)-u^\varepsilon_0\|_{L^1}+\|u^\varepsilon_0-v\|_{L^1},
\end{equation}
for all $t\in[0,m\varepsilon^{-k}]$. The last term of the right hand side of \eqref{dis-triangle} tends to zero as $\varepsilon\rightarrow0$ thanks to hypothesis \eqref{u^eps tende a v}. On the other hand,
\begin{equation}\label{u^eps(t)-u^eps(0)}
\sup_{0\leq\,t\,\leq m\varepsilon^{-k}}{\|u^\varepsilon(\cdot,t)-u^\varepsilon_0\|}_{L^1} \leq\int_0^{m\varepsilon^{-k}}{\|u_t^\varepsilon\|}_{L^1}\,dt.
\end{equation} 
To estimate the last term of \eqref{u^eps(t)-u^eps(0)}, we use Proposition \ref{norma_L2<Ceps^k+1}; if $\varepsilon$ is small enough so that $C_1\varepsilon^{-1}\geq m$, using \eqref{norma-L2(u_t)<eps^k+1} and Cauchy-Schwarz inequality, we have  
$$\int_0^{m\varepsilon^{-k}}{\|u_t^\varepsilon\|}_{L^1}\,dt\leq m^\frac12\varepsilon^{-\frac k2}(b-a)^\frac12 ({C_2\varepsilon^{k+1})}^\frac12\leq {(m(b-a)C_2)}^\frac12\varepsilon^\frac12,$$
and so
\begin{equation}\label{norma-L1-u_t^eps}
\lim_{\varepsilon\rightarrow0}\int_0^{m\varepsilon^{-k}}{\|u_t^\varepsilon\|}_{L^1}\,dt=0.
\end{equation}
By combining \eqref{u^eps tende a v}, \eqref{dis-triangle}, \eqref{u^eps(t)-u^eps(0)} and \eqref{norma-L1-u_t^eps}, we obtain \eqref{metastability-limit}.
\end{proof}
Now, let us study the motion of the transition points. To do this, we present a preliminary Lemma concerning the structure of $u^\varepsilon(\cdot,t)$. Like Proposition \ref{stima dal basso energia}, this Lemma is purely variational in character: equation \eqref{allen-cahn-iper} plays no role.
\begin{lem}\label{lemma-struttura-u^eps}
Let $F:\mathbb{R}\rightarrow\mathbb{R}$ be a function satisfying \eqref{hp-F-1}, \eqref{hp-F-2}, \eqref{hp-F-3} and let $v:(a,b)\rightarrow\{-1,+1\}$ be a piecewise constant function with $N$ discontinuities at points $\gamma_1<\gamma_2<\cdots<\gamma_N$. There exists a constant $\bar\delta>0$ such that for any $\delta<\bar\delta$ holds the following property: if $w^\varepsilon\in H^1$ satisfies
$$\|w^\varepsilon-v\|_{L^1}\leq\frac\delta2, \qquad \quad \mbox{ and } \qquad \quad E_\varepsilon\left[w^\varepsilon,0\right]\leq Nc_0+C\varepsilon$$
for sufficiently small $\varepsilon$, then there exist intervals $(x_i,y_i)$ containing $\gamma_i$ such that 
\begin{align}
|x_i-\gamma_i|\leq\delta, &\qquad \qquad |y_i-\gamma_i|\leq\delta \label{x_i,y_i,gamma_i}\\
\lim_{\varepsilon\rightarrow0}F(w^\varepsilon(x))=0 \qquad\quad&\quad\mbox{ for } \qquad\quad x\not\in\bigcup_{i=1}^N (x_i,y_i). \label{F(w(x)) tends 0}
\end{align}
\end{lem}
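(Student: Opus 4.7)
The strategy is to localize the lower-bound argument of Proposition \ref{stima dal basso energia} around each discontinuity $\gamma_i$, showing that each transition already consumes essentially $c_0$ of energy and leaves at most an $O(\varepsilon)$ reserve in the complement, and then to promote this integral excess bound to pointwise information via the primitive $\Psi$ with $\Psi'=\sqrt{2F}$ that already appears in the proof of Proposition \ref{stima dal basso energia}.

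First I fix $\bar\delta>0$ so small that the intervals $\{(\gamma_i-\bar\delta,\gamma_i+\bar\delta)\}_{i=1}^N$ are pairwise disjoint and contained in $(a,b)$, and I choose $\delta<\bar\delta$. On each $(\gamma_i-\delta,\gamma_i+\delta)$ I reproduce the base step ($k=1$) of the induction in the proof of Proposition \ref{stima dal basso energia}, using only the hypotheses $\|w^\varepsilon-v\|_{L^1}\leq\delta/2$ and $E_\varepsilon[w^\varepsilon,0]\leq Nc_0+C\varepsilon$. This produces, for each $i$, points $x_i\in(\gamma_i-\delta,\gamma_i)$ and $y_i\in(\gamma_i,\gamma_i+\delta)$, satisfying \eqref{x_i,y_i,gamma_i}, such that $F(w^\varepsilon(x_i)),\,F(w^\varepsilon(y_i))\leq C\varepsilon$ (so in particular $w^\varepsilon(x_i),\,w^\varepsilon(y_i)$ tend to the appropriate values $\pm 1$ of $v$), together with the local lower bound
\[
\int_{x_i}^{y_i}\left[\frac{\varepsilon}{2}(w^\varepsilon_x)^2+\frac{F(w^\varepsilon)}{\varepsilon}\right]dx\geq c_0-c_1\varepsilon.
\]
Summing over $i$ and subtracting from the energy hypothesis yields the excess-energy bound
\[
\int_{\Omega^c}\left[\frac{\varepsilon}{2}(w^\varepsilon_x)^2+\frac{F(w^\varepsilon)}{\varepsilon}\right]dx\leq C''\varepsilon,\qquad \Omega^c:=[a,b]\setminus\bigcup_{i=1}^N(x_i,y_i).
\]

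On each connected component $J$ of $\Omega^c$ the function $v$ is constant with value $c_*\in\{-1,+1\}$, and at least one endpoint $y'$ of $\overline J$ is either some $y_i$ or some $x_{i+1}$ at which $w^\varepsilon(y')\to c_*$ by the preceding paragraph (for the two extreme components $(a,x_1)$ and $(y_N,b)$ the relevant endpoint is the interior one). Applying the pointwise Modica-type inequality $\tfrac12\varepsilon w_x^2+F(w)/\varepsilon\geq|\tfrac{d}{dx}\Psi(w)|$ on $J$, I obtain, for every $y\in J$,
\[
|\Psi(w^\varepsilon(y))-\Psi(w^\varepsilon(y'))|\leq\int_J\left[\frac{\varepsilon}{2}(w^\varepsilon_x)^2+\frac{F(w^\varepsilon)}{\varepsilon}\right]dx\leq C''\varepsilon,
\]
so $\Psi(w^\varepsilon(y))\to\Psi(c_*)$ uniformly on $J$ as $\varepsilon\to 0$. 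By assumption \eqref{hp-F-3} we have $F>0$ off $\{-1,+1\}$, so $\Psi$ is strictly increasing on $\mathbb{R}$; combined with the growth condition in \eqref{hp-F-1} this makes $\Psi$ a continuous bijection $\mathbb{R}\to\mathbb{R}$ whose inverse is continuous. Hence $w^\varepsilon(y)\to c_*$ and therefore $F(w^\varepsilon(y))\to 0$ on $J$. Since there are only finitely many components $J$, \eqref{F(w(x)) tends 0} follows.

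The main technical delicacy I anticipate is the transfer from the integral excess-energy bound to the pointwise statement \eqref{F(w(x)) tends 0}: because $\Psi'(\pm 1)=\sqrt{2F(\pm 1)}=0$, the inverse $\Psi^{-1}$ is not Lipschitz at $\Psi(\pm 1)$, so only the qualitative continuity of $\Psi^{-1}$ at $\Psi(c_*)$ is available and no explicit rate in $\varepsilon$ can be extracted. This suffices for the statement of the lemma; a quantitative rate would instead require iterating the higher-$l$ induction of Proposition \ref{stima dal basso energia}.
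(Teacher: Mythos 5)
Your proposal is correct and follows essentially the same route as the paper: localize the $l=1$ step of Proposition \ref{stima dal basso energia} around each $\gamma_i$ to produce $x_i,y_i$ with $F(w^\varepsilon(x_i)),F(w^\varepsilon(y_i))\leq C\varepsilon$ and local energy at least $c_0-C\varepsilon$, subtract to get an $O(\varepsilon)$ bound on the energy over the complement, and control the oscillation of $\Psi(w^\varepsilon)$ on each connected component together with the endpoint values to pass to the limit. Your explicit remark on the mere continuity (non-Lipschitz at $\Psi(\pm1)$) of $\Psi^{-1}$ is a point the paper leaves implicit, but it changes nothing in substance.
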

\begin{proof}
For ease of notation, we set $\gamma_0=a, \gamma_{N+1}=b$. Let us define
\begin{equation}\label{delta-bar}
\bar\delta:=\min_{0\leq i\leq N}\frac{\gamma_{i+1}-\gamma_i}2.
\end{equation}
Fix $0<\delta<\bar\delta$ so that
$$\gamma_i+\delta<\gamma_{i+1}-\delta \qquad \quad 0\leq i\leq N.$$
Arguing as for \eqref{x_1,y_1} and \eqref{stima-l=1}-\eqref{stima-caso-l=1 (2)} we obtain the existence of points $x_i,y_i$ satisfying \eqref{x_i,y_i,gamma_i} and
\begin{equation}\label{F(w(x_i))<Ceps/delta}
F(w^\varepsilon(x_i))\leq \frac{C\varepsilon}\delta, \qquad \quad F(w^\varepsilon(y_i))\leq\frac{C\varepsilon}\delta,
\end{equation}
\begin{equation}\label{E_p(w)>c0-Ceps/delta}
\int_{x_i}^{y_i}\left[\frac\varepsilon2 {(w^\varepsilon_x)}^2+\frac{F(w^\varepsilon)}\varepsilon\right]dx\geq c_0-\frac{C\varepsilon}\delta.
\end{equation}
Let $V=[a,b]\backslash\bigcup_{i=1}^N(x_i,y_i)$. Then by \eqref{E_p(w)>c0-Ceps/delta}
$$\int_V\left[\frac\varepsilon2 {(w^\varepsilon_x)}^2+\frac{F(w^\varepsilon)}\varepsilon\right]dx\leq \frac{C\varepsilon}\delta.$$
Arguing as in \eqref{stima-l=1} we conclude that
$$\int_V\left|\frac{d}{dx}\Psi(w^\varepsilon)\right|dx\leq\frac{C\varepsilon}\delta.$$
Thus on each connected component of $V$ the oscillation of $\Psi(w^\varepsilon)$ is controlled and the endpoints are controlled as well, by \eqref{F(w(x_i))<Ceps/delta}. Passing to the limit for $\varepsilon\rightarrow0$ in these estimates, we obtain \eqref{F(w(x)) tends 0}. Indeed, for example, let $\xi\in[a,x_1)$; since $\Psi'(s)=\sqrt{2F(s)}$ is positive except at $s=\pm1$, we have 
$$\frac{C\varepsilon}{\delta}\geq\left|\int_\xi^{x_1}\Psi'(w^\varepsilon)w^\varepsilon_xdx\right|=\left|\int_{w^\varepsilon(\xi)}^{w^\varepsilon(x_1)}\Psi'(s)ds\right|$$
and so
$$|\Psi(w^\varepsilon(x_1))-\Psi(w^\varepsilon(\xi))|\leq \frac{C\varepsilon}\delta,$$
for all $\xi\in[a,x_1)$. It follows that
\begin{equation}\label{|w(x_1)-w(xi)|}
\lim_{\varepsilon\rightarrow0}|w^\varepsilon(x_1)-w^\varepsilon(\xi)|=0,
\end{equation} 
for all $\xi\in[a,x_1)$. Combining  \eqref{F(w(x_i))<Ceps/delta} and \eqref{|w(x_1)-w(xi)|}, we obtain \eqref{F(w(x)) tends 0} for all $\xi\in[a,x_1)$.
\end{proof}
If $\delta\leq\bar\delta$ and hypotheses are satisfied, roughly speaking Lemma \ref{lemma-struttura-u^eps} asserts that $w^\varepsilon(x)\approx\pm1$ for small $\varepsilon$ if $|x-\gamma_i|\geq\delta$ for any $i=1,\ldots,N$. Indeed, \eqref{F(w(x)) tends 0} implies that $w^\varepsilon(x)\rightarrow\pm 1$ as $\varepsilon\rightarrow0$ for $x{\not\in\displaystyle\bigcup^N_{i=1}(x_i,y_i)}$. We use this Lemma to obtain results on the speed of the transition points. This is the subject of the following theorem.\par
Before stating the theorem let us recall some definitions. If $v$ is a step function with jumps at $\gamma_1,\gamma_2,\ldots,\gamma_N$, then its \emph{interface} $I[v]$ is defined by
$$I[v]:=\{\gamma_1,\gamma_2,\ldots,\gamma_N\}.$$
Now we fix some closed subset $K\subset\mathbb{R}\backslash\{\pm1\}$ and let $u:[a,b]\rightarrow\mathbb{R}$ be an arbitrary function. Then the \emph{interface} $I_K[u]$ is defined by
$$I_K[u]:=u^{-1}(K).$$
Finally, if $A,B\subset\mathbb{R}$, the \emph{Hausdorff distance} $d(A,B)$ between $A$ and $B$ is defined by $$d(A,B):=\max\left\{\sup_{\alpha\in A}d(\alpha,B),\,\sup_{\beta\in B}d(\beta,A)\right\}$$
where $d(\beta,A):=\inf\{|\beta-\alpha|: \alpha\in A\}$.
\begin{thm}\label{Teorema-distanza interfacce}
Assume that $F$ and $g$ satisfy \eqref{g strettamente positiva}-\eqref{hp-F-3} and that the initial data $u^\varepsilon_0$, $u^\varepsilon_1$ satisfy \eqref{u^eps tende a v} and \eqref{energia iniziale<} for some $k>0$. Let $u^\varepsilon$ solution of \eqref{allen-cahn-iper}-\eqref{Neumann-bordo}-\eqref{cond-iniz}. Given $\delta_1>0$ and a closed subset $K$ of $\mathbb{R}\backslash\{\pm1\}$, let
$$T_\varepsilon(\delta_1)=\inf\{t:\; d(I_K[u^\varepsilon(\cdot,t)],I_K[u^\varepsilon_0])>\delta_1\}.$$
If $\delta_1$ is sufficiently small, then for any $m>0$
\begin{equation}\label{T-distanza interfacce}
T_\varepsilon(\delta_1)> m\varepsilon^{-k}
\end{equation}
for $\varepsilon<\varepsilon_0(m,\delta_1)$.
\end{thm}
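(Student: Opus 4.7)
The plan is to reduce the theorem to Lemma \ref{lemma-struttura-u^eps} applied separately to $u_0^\varepsilon$ and to $u^\varepsilon(\cdot,t)$ for every $t\in[0,m\varepsilon^{-k}]$, combined with Theorem \ref{metastability-thm} (to supply the $L^1$ hypothesis of the lemma at later times) and with the monotonicity of $E_\varepsilon$ along solutions (to supply the energy hypothesis). Since $K$ is closed and disjoint from $\{\pm1\}$, there exists $\eta>0$ with $K\cap\bigl([-1-\eta,-1+\eta]\cup[1-\eta,1+\eta]\bigr)=\emptyset$. The target is to show that, for $\varepsilon$ small, both $I_K[u_0^\varepsilon]$ and $I_K[u^\varepsilon(\cdot,t)]$ lie inside $\bigcup_{i=1}^N(\gamma_i-\delta,\gamma_i+\delta)$ for a suitable $\delta\leq\delta_1/2$; the Hausdorff distance will then be at most $2\delta\leq\delta_1$.

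Fix $\delta=\min\{\bar\delta,\delta_1/2\}$ with $\bar\delta$ from Lemma \ref{lemma-struttura-u^eps}, and restrict to $\varepsilon$ small enough that Theorem \ref{metastability-thm} gives $\|u^\varepsilon(\cdot,t)-v\|_{L^1}\leq\delta/2$ for every $t\in[0,m\varepsilon^{-k}]$. The energy dissipation inequality \eqref{energy dissipation} together with \eqref{energia iniziale<} yields
$$\int_a^b\Bigl[\frac{\varepsilon}{2}u^\varepsilon_x(x,t)^2+\frac{F(u^\varepsilon(x,t))}{\varepsilon}\Bigr]dx\leq E_\varepsilon[u^\varepsilon,u^\varepsilon_t](t)\leq Nc_0+C\varepsilon^k,$$
which for $k\geq 1$ meets the $Nc_0+C\varepsilon$ requirement of Lemma \ref{lemma-struttura-u^eps} (for $0<k<1$ the proof of the lemma carries over verbatim with $\varepsilon^k$ in place of $\varepsilon$, yielding the same qualitative conclusion). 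Hence the lemma applies to $w^\varepsilon=u^\varepsilon(\cdot,t)$ for every such $t$ and produces intervals $(x_i(t),y_i(t))\subset(\gamma_i-\delta,\gamma_i+\delta)$ on whose complement $F(u^\varepsilon(x,t))\to 0$ as $\varepsilon\to 0$; inspection of the proof shows that this convergence is uniform in $x$ (via control of the oscillation of $\Psi(u^\varepsilon)$ on each connected component), with a rate depending only on $\delta$ and on the energy excess above $Nc_0$, hence independent of $t$. Therefore for $\varepsilon<\varepsilon_0$ one has $u^\varepsilon(x,t)\in(-1-\eta,-1+\eta)\cup(1-\eta,1+\eta)$ whenever $x\notin\bigcup_i(x_i(t),y_i(t))$, so $u^\varepsilon(x,t)\notin K$ there and
$$I_K[u^\varepsilon(\cdot,t)]\subset\bigcup_{i=1}^N(\gamma_i-\delta,\gamma_i+\delta);$$
the same inclusion for $I_K[u_0^\varepsilon]$ follows by applying Lemma \ref{lemma-struttura-u^eps} at $t=0$.

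To conclude, take any $\alpha\in I_K[u^\varepsilon(\cdot,t)]$: then $\alpha\in(\gamma_j-\delta,\gamma_j+\delta)$ for some $j$. On this interval both $u_0^\varepsilon$ and $u^\varepsilon(\cdot,t)$ execute the same transition between neighborhoods of $-1$ and $+1$ dictated by $v$, so the intermediate value theorem guarantees that $u_0^\varepsilon$ attains the value $u^\varepsilon(\alpha,t)\in K$ somewhere in $(\gamma_j-\delta,\gamma_j+\delta)$; in particular $I_K[u_0^\varepsilon]\cap(\gamma_j-\delta,\gamma_j+\delta)\neq\emptyset$ and $d(\alpha,I_K[u_0^\varepsilon])\leq 2\delta\leq\delta_1$. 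The symmetric argument with $u_0^\varepsilon$ and $u^\varepsilon(\cdot,t)$ swapped yields $d(\beta,I_K[u^\varepsilon(\cdot,t)])\leq\delta_1$ for every $\beta\in I_K[u_0^\varepsilon]$, hence $d(I_K[u^\varepsilon(\cdot,t)],I_K[u_0^\varepsilon])\leq\delta_1$ uniformly in $t\leq m\varepsilon^{-k}$, proving \eqref{T-distanza interfacce}.

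The main obstacle is ensuring that the conclusion $F(u^\varepsilon(x,t))\to 0$ of Lemma \ref{lemma-struttura-u^eps} is uniform both in $x$ outside the prescribed intervals and in $t\in[0,m\varepsilon^{-k}]$: the constants produced by the proof of the lemma must depend only on the $L^1$-closeness to $v$ and on the energy excess above $Nc_0$, quantities which Theorem \ref{metastability-thm} and the monotonicity of $E_\varepsilon$ control uniformly in $t$. A secondary delicate point is the intermediate value step certifying that both interface sets are populated in every relevant $\gamma_i$-neighborhood; this uses that, up to an error $\eta$, the transitions of $u_0^\varepsilon$ and of $u^\varepsilon(\cdot,t)$ share endpoint behavior dictated by the same step function $v$.
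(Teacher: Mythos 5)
Your argument is correct and follows the paper's proof in all essentials: Lemma \ref{lemma-struttura-u^eps} is applied to $u_0^\varepsilon$ and to $u^\varepsilon(\cdot,t)$ for every $t\le m\varepsilon^{-k}$ (the $L^1$ hypothesis supplied by Theorem \ref{metastability-thm}, the energy hypothesis by \eqref{energia iniziale<} together with the monotonicity of $E_\varepsilon$), and both interfaces are confined to $\delta$-neighborhoods of the $\gamma_i$, whence the Hausdorff bound; the paper merely closes with a triangle inequality through $I[v]$, proving $d(I_K[u^\varepsilon_0],I[v])\le\delta_1/2$ and $d(I_K[u^\varepsilon(\cdot,T)],I[v])\le\delta_1/2$, instead of your direct two-sided comparison. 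The one point to flag is your intermediate-value step: claiming $u_0^\varepsilon$ attains the exact value $u^\varepsilon(\alpha,t)\in K$ tacitly assumes that value lies in $(-1+\eta,1-\eta)$ (excluding values of $K$ beyond $\pm1$ would need a short extra energy/overshoot argument), but this populating of the interface near each $\gamma_i$ is left equally implicit in the paper's own proof, so it does not constitute a departure from the paper's level of detail.
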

\begin{proof}
Let $\delta_1/2<\bar\delta$, where $\bar\delta$ is defined in \eqref{delta-bar}, so that we can apply Lemma \ref{lemma-struttura-u^eps}. Define 
$$L_i:=\left(\gamma_i-\frac{\delta_1}2,\gamma_i+\frac{\delta_1}2\right)\qquad i=1,\ldots,N \qquad \mbox{ and }\quad L:=L_1\cup\cdots\cup L_N.$$ 
Furthermore, choose $\varrho>0$ such that $(-1-\varrho,-1+\varrho)$ and $(1-\varrho,1+\varrho)$ are contained in $\mathbb{R}\backslash K$.
From hypotheses \eqref{u^eps tende a v}, \eqref{energia iniziale<} and Lemma \ref{lemma-struttura-u^eps}, it follows that for sufficiently small  $\varepsilon>0$ we have 
$$d\left(\{u^\varepsilon_0(x)\; : \; x\in[a,b]\backslash L\} ,\{\pm1\}\right)<\varrho.$$ 
This implies the inclusion $I_K[u^\varepsilon_0]\subset L$ and therefore $$d(I_K[u^\varepsilon_0],I[v])\leq\delta_1/2.$$
By Theorem \ref{metastability-thm} and the fact that $E_\varepsilon[u^\varepsilon,u^\varepsilon_t](t)$ is decreasing in $t$, we can apply Lemma \ref{lemma-struttura-u^eps} to $w=u^\varepsilon(\cdot,T)$ for all $T\leq m\varepsilon^{-k}$ if $\varepsilon$ is sufficiently small. Therefore we obtain $d(I_K[u^\varepsilon(\cdot,T)],I[v])\leq\delta_1/2$. Using triangle inequality we infer for sufficiently small $\varepsilon>0$
$$d(I_K[u^\varepsilon_0],I_K[u^\varepsilon(\cdot,T])\leq\delta_1$$
for any $T\leq m\varepsilon^{-k}$.
\end{proof}

\section{Numerical explorations}\label{numerical example}
In this Section we present some numerical solutions of \eqref{Cauchy problem} showing dynamical metastability. We consider the case $g(u)=1-\tau f'(u)$.\par
As previously mentioned, equation \eqref{allen-cahn-iper-g=1-tau f'} has a probabilistic interpretation and appears in the description of a \emph{correlated random walk}. Following Taylor \cite{Tay}, Goldstein \cite{Gol}, Kac \cite{Kac}, Zauderer \cite{Zau} and Holmes \cite{Holmes}, we consider particles moving along a line. We assume that the particles make steps of length $dx$ and duration $dt$. At all times particles move with velocity $\gamma=\frac{dx}{dt}$ and at any step a particle continues in its previous direction with probability $p$ and reverses direction with probability $q$. For small $dt$, $p=1-\lambda dt$ and $q=\lambda dt$, where $\lambda$ is the rate of reversal. The reversal process can be thought as a Poisson process with intensity $\lambda$. We split the particle density $u(x,t)=\alpha(x,t)+\beta(x,t)$, where $\alpha(x,t)$ and $\beta(x,t)$ are the densities, at coordinate $x$ at time $t$, of particles that arrived from the left and right, respectively. Finally, we assume that in the time interval $[t,t+dt]$, $dt f(u(x,t))$ particles are produced in $x$ and the new particles have equal probability of going left or right. It follows that
\begin{align}
\alpha(x,t+dt)=p\alpha(x-dx,t)+q\beta(x-dx,t)+\frac12dtf(u(x-dx,t)), \label{diff-finite alfa}\\
\beta(x,t+dt)=p\beta(x+dx,t)+q\alpha(x+dx,t)+\frac12dtf(u(x+dx,t)). \label{diff-finite beta}
\end{align}
We substitute $p=1-\lambda dt$, $q=\lambda dt$ and $dx=\gamma dt$, use the Taylor series to expand $\alpha,\beta$ and take the limit as $dt$ goes to zero to obtain the \emph{nonlinear Goldstein-Kac system}
\begin{align}
\alpha_t+\gamma\alpha_x=\lambda(\beta-\alpha)+\frac12f(\alpha+\beta) \label{Goldstein-Kac1},\\
\beta_t-\gamma\beta_x=\lambda(\alpha-\beta)+\frac12f(\alpha+\beta)\label{Goldstein-Kac2}.
\end{align}
Written in terms of the particle density $u$ and the particle flow $v:=\alpha-\beta$ this systems reads
\begin{align}
u_t+\gamma v_x &= f(u), \label{cat1}\\
v_t+\gamma u_x &=-2\lambda v. \label{cat2}
\end{align}
From this system, it follows that $u$ is a solution of the equation
$$u_{tt}+(2\lambda-f'(u))u_t=\gamma^2u_{xx}+2\lambda f(u),$$
that is \eqref{allen-cahn-iper-g=1-tau f'} with $\tau=\frac1{2\lambda}$ and $\varepsilon=\frac\gamma{\sqrt{2\lambda}}$.\par
Let us use solutions of the nonlinear Goldstein-Kac system \eqref{Goldstein-Kac1}-\eqref{Goldstein-Kac2} to construct solutions of \eqref{Cauchy problem}. We define the boundary conditions for system \eqref{Goldstein-Kac1}-\eqref{Goldstein-Kac2}. Assume that no particle can leave the interval $[a,b]$. Hence particles are reflected and the boundary conditions for the system \eqref{Goldstein-Kac1}-\eqref{Goldstein-Kac2} reads
\begin{equation}\label{GK-bordo}
\alpha(a,t)=\beta(a,t),\qquad\; \beta(b,t)=\alpha(b,t) \qquad \;\forall t\geq0.
\end{equation}
The corresponding boundary conditions for the system \eqref{cat1}-\eqref{cat2} are 
$$v(a,t)=v(b,t)=0 \qquad \quad \forall\,t\geq0.$$ 
Therefore, from \eqref{cat2} it follows that $u$ satisfies homogeneous Neumann boundary conditions \eqref{Neumann-bordo}. Moreover, calculating \eqref{cat1} in $t=0$, we obtain
\begin{equation}\label{initial data GK-allen cahn iper}
\begin{cases}
\displaystyle\alpha_0(x)+\beta_0(x) =  u_0(x) \\
\displaystyle\alpha_0(x)-\beta_0(x) = \dfrac1\gamma\int_a^x [f(u_0(s))-u_1(s)]ds
\end{cases},
\end{equation}
where $\alpha_0(x)=\alpha(x,0)$ and $\beta_0(x)=\beta(x,0)$. In conclusion, we can say that if $(\alpha,\beta)$ is a solution sufficiently regular of \eqref{Goldstein-Kac1}-\eqref{Goldstein-Kac2} with 
$$\lambda=\frac1{2\tau}\, \qquad \mbox{ and } \qquad \gamma=\frac{\varepsilon}{\sqrt{\tau}},$$
satisfying boundary conditions \eqref{GK-bordo} and initial data satisfying \eqref{initial data GK-allen cahn iper}, then $u=\alpha+\beta$ is solution of \eqref{Cauchy problem} with $g(u)=1-\tau f'(u)$.\par
From \eqref{GK-bordo} and \eqref{initial data GK-allen cahn iper}, it follows the compatibility condition for the initial data
\begin{equation}\label{condition initial data}
\int_a^b[f(u_0(s))-u_1(s)]ds=0.
\end{equation}
Given $(u_0,u_1)\in H^2([a,b])\times H^1([a,b])$ satisfying \eqref{condition initial data}, the corresponding initial data for system \eqref{Goldstein-Kac1}-\eqref{Goldstein-Kac2} are  
\begin{equation}\label{alfa_0,beta_0}
\begin{cases}
\displaystyle\alpha_0(x) =\dfrac12\left(u_0(x)+\dfrac1\gamma\int_a^x [f(u_0(s))-u_1(s)]ds\right) \\
\displaystyle\beta_0(x) = \dfrac12\left(u_0(x)-\dfrac1\gamma\int_a^x [f(u_0(s))-u_1(s)]ds\right)
\end{cases}
\end{equation}
In this Section we use a finite difference method based on system \eqref{diff-finite alfa}-\eqref{diff-finite beta} with $p=1-\lambda dt$, $q=\lambda dt$ and $dx=\gamma dt$ to calculate numerical solutions of \eqref{Goldstein-Kac1}-\eqref{Goldstein-Kac2}, with boundary conditions \eqref{GK-bordo} and initial data \eqref{alfa_0,beta_0}. The sum $u=\alpha+\beta$ is the solution of \eqref{Cauchy problem}.\par
Let us consider $f(u)=u-u^3$ and so $g(u)=1-\tau(1-3u^2)$. Let $[a,b]=[-4,4]$. \\\par

\textbf{Example 1}. In the first example, we choose $u_0(x)=\cos(\frac\pi2 x)/10$ and $u_1=0$. The condition \eqref{condition initial data} is satisfied. The numerical solutions for different times $t$ are shown in Figure \ref{u0=cos eps=0.01}. The initial profile $u_0$ has $4$ zeros and takes values in $[-0.1,0.1]$. In a short time a metastable state is formed: in the intervals where $u_0>0$ ($u_0<0$), the solution $u$ reaches the value $1$ ($-1$) in a short time $t$ and so we have $4$ transitions between $1$ and $-1$. In a longer time scale the solution evolves very slowly and appears to be stable. In this example, $u_1=0$ and the qualitative behavior of the solution is the same as the parabolic case \eqref{Allen-Cahn}.

\begin{figure}[htbp]
\centering
\includegraphics[scale=.4]{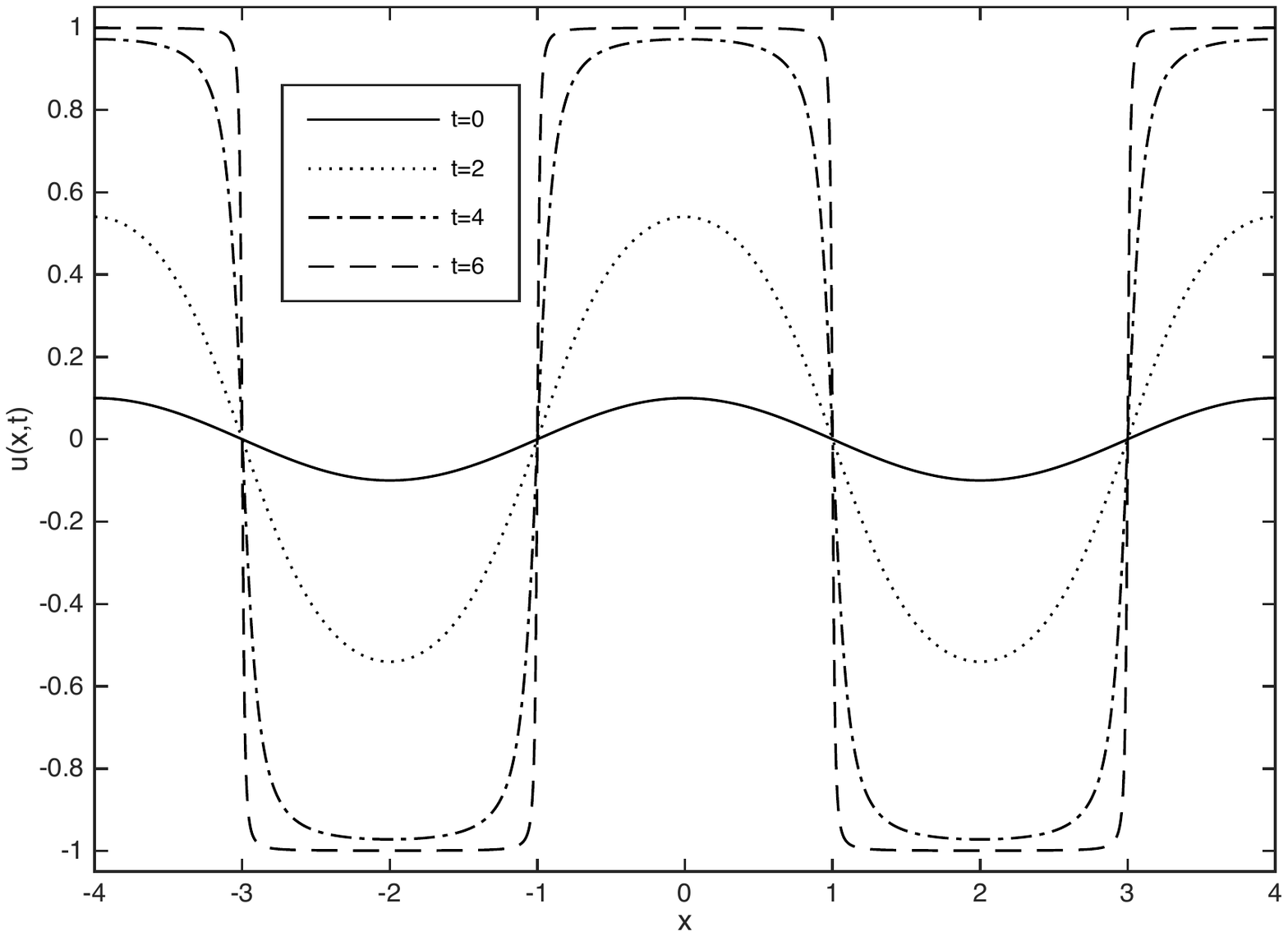}%
\includegraphics[scale=.4]{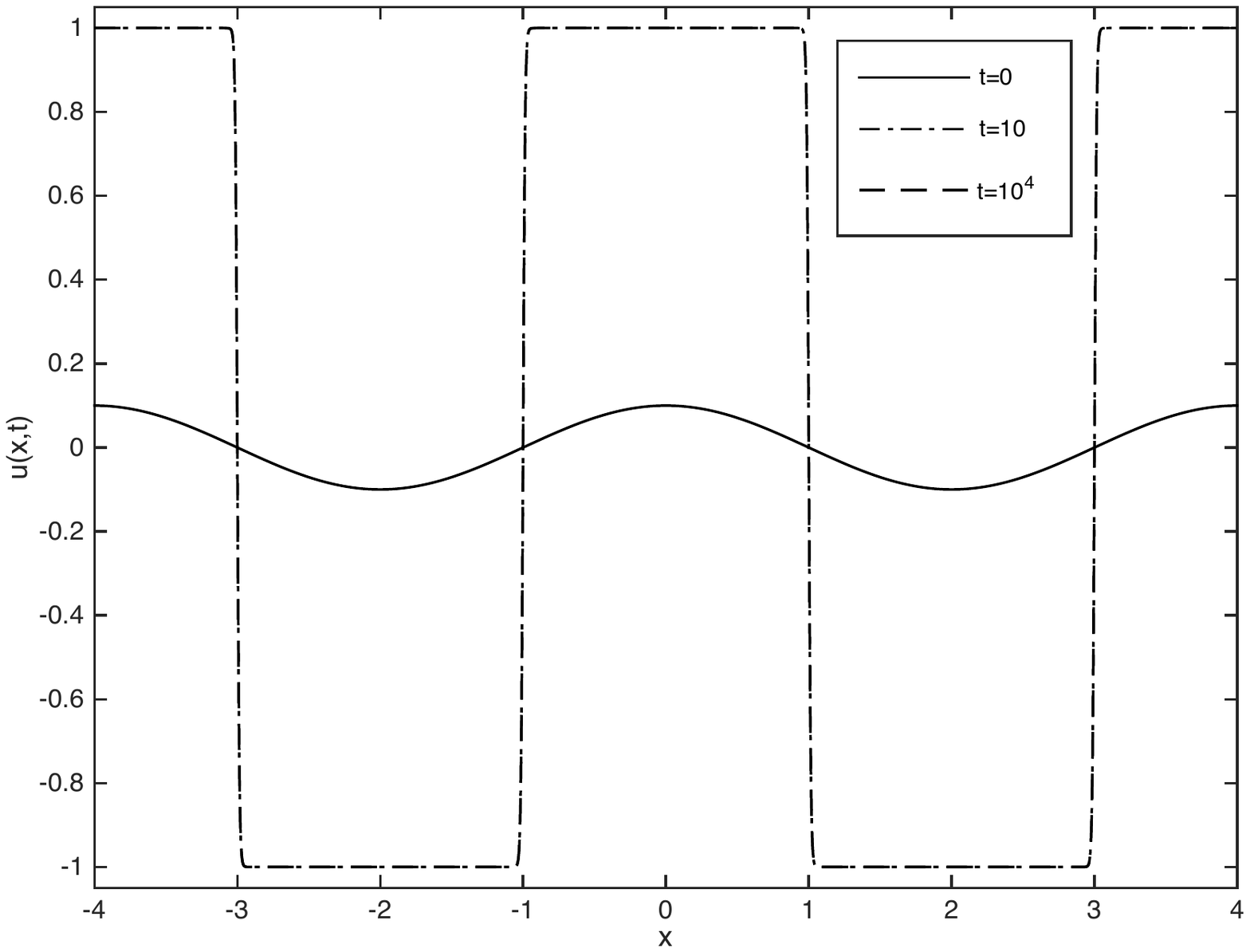}
\caption{Initial data: $u_0(x)=\cos(\frac\pi2 x)/10$, $u_1(x)=0$. The values of constants are: $\varepsilon=0.01, \tau=0.8$.}
\label{u0=cos eps=0.01}
\end{figure}

\textbf{Example 2}. Let $u_0(x)=0$ for all $x\in[-4,4]$ and $u_1(x)=\cos(\frac\pi2 x)$.  The condition \eqref{condition initial data} is satisfied; the numerical solutions are shown in Figure \ref{u0=0 eps=0.1}. Even if the initial profile $u_0$ is identically zero, a metastable state is created. The number of transitions between $1$ and $-1$ is equal to the number of sign changes of $u_1$. This is a simple example where $u_0$ has not transitions, but the initial velocity $u_1$ \emph{creates} a metastable state.

\begin{figure}[htbp]
\centering
\includegraphics[scale=.4]{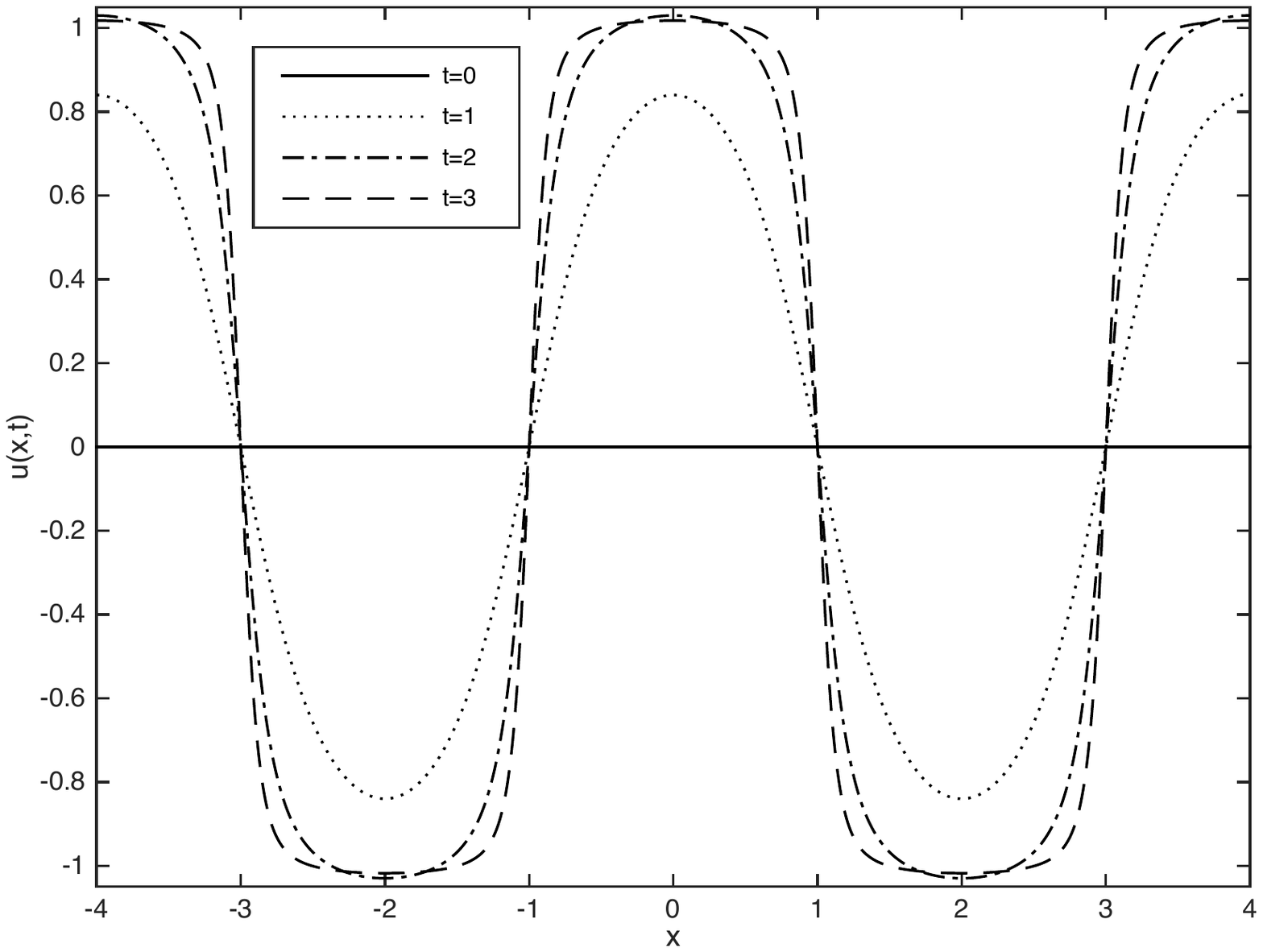}%
\includegraphics[scale=.4]{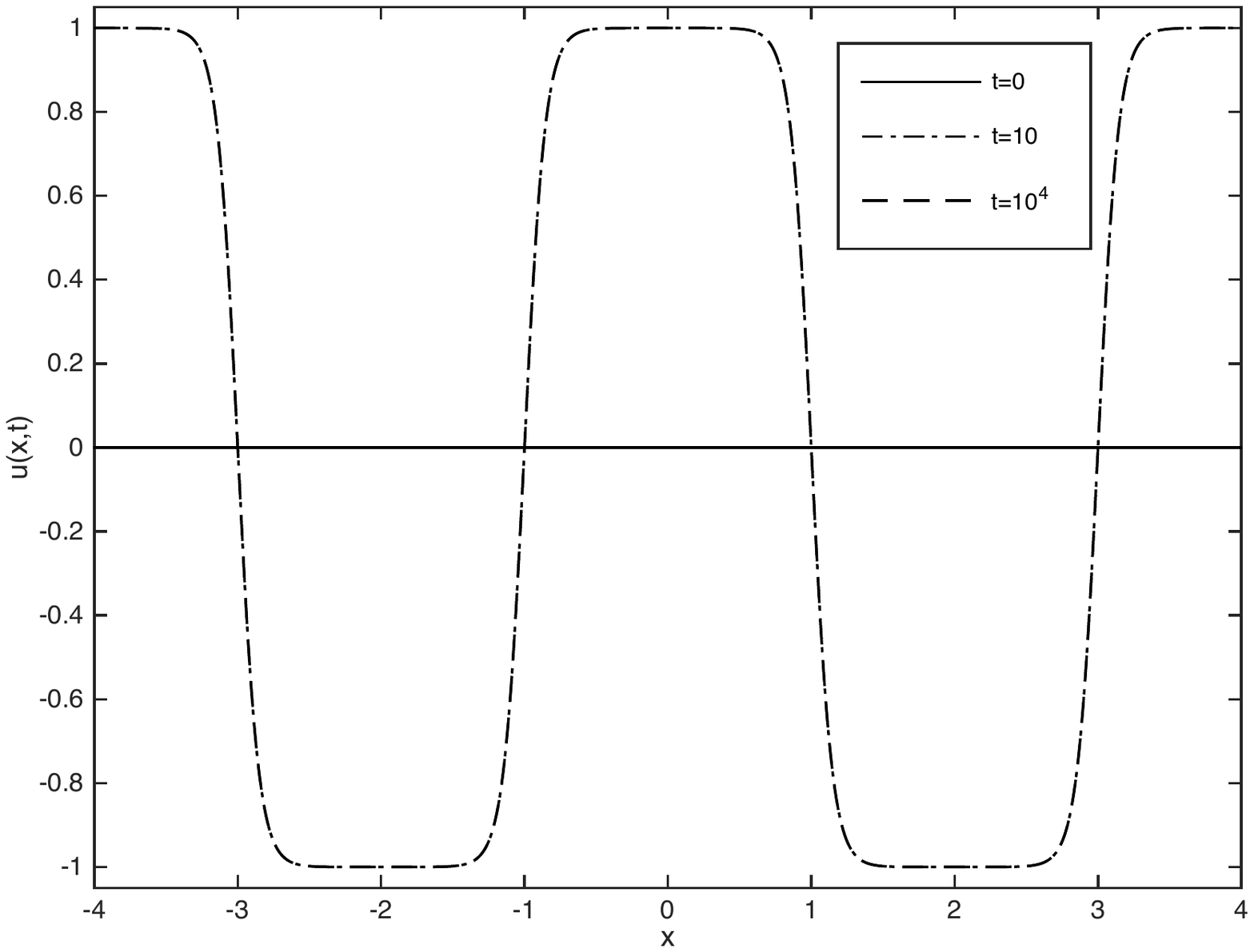}
\caption{Initial data: $u_0(x)=0$, $u_1(x)=\cos(\frac\pi2 x)$. The values of constants are: $\varepsilon=0.1, \tau=0.8$.}
\label{u0=0 eps=0.1}
\end{figure}

In the first two examples $u_0$ does not verify the hypotheses of Theorem \ref{metastability-thm}. In the following examples we will consider initial profiles $u_0$ verifying hypotheses of Theorem \ref{metastability-thm}.\par
\textbf{Example 3}. We consider an initial profile $u_0$ that satisfies the hypotheses of Theorem \ref{metastability-thm}. To do this, we use a travelling wave solution of \eqref{allen-cahn-iper-g=1-tau f'} connecting $-1$ and $1$, i.e. a solution of the problem
\begin{equation}\label{sol-staz-eteroclina}
\begin{cases}
\varepsilon^2u_{xx}+u-u^3=0 \qquad \qquad x\in\mathbb{R}\\
\displaystyle\lim_{x\rightarrow-\infty}u(x)=-1 \qquad  \lim_{x\rightarrow+\infty}u(x)=1.
\end{cases}
\end{equation}
A solution of \eqref{sol-staz-eteroclina} is 
\begin{equation}\label{tanh}
\phi^\varepsilon(x)=\tanh\left(\frac x{\sqrt2\varepsilon}\right).
\end{equation}
We have
$$\lim_{\varepsilon\rightarrow0}\phi^\varepsilon(x)=\begin{cases}
-1 \qquad \,\mbox{ if }\; x<0\\
1 \qquad\quad \mbox{ if } \; x>0
\end{cases} \qquad \quad \mbox{ in } \, L^1(a,b)$$
and
$$\lim_{\varepsilon\rightarrow0}\int_a^b\left[\frac\varepsilon2 \left(\phi^\varepsilon_x\right)^2+\frac1{4\varepsilon}\left((\phi^\varepsilon)^2-1\right)^2\right]dx=\frac{2\sqrt2}3=:c_0,$$
for all $a<0<b$. Let $[a,b]=[-4,4]$, $u_0(x)=\displaystyle\tanh\left(\frac x{\sqrt2 \varepsilon}\right)$ and $u_1(x)=-x$. The initial data $u_0$ and $u_1$ are odd functions and so the condition \eqref{condition initial data} is satisfied. The numerical solutions are shown in Figure \ref{u0=tanh eps=0.2}. The initial profile $u_0$ has a transition layer structure, with a transition at $x=0$, but the initial velocity $u_1$ in a short time \emph{creates} a metastable state with $3$ transitions.\par
\begin{figure}[htbp]
\centering
\includegraphics[scale=.4]{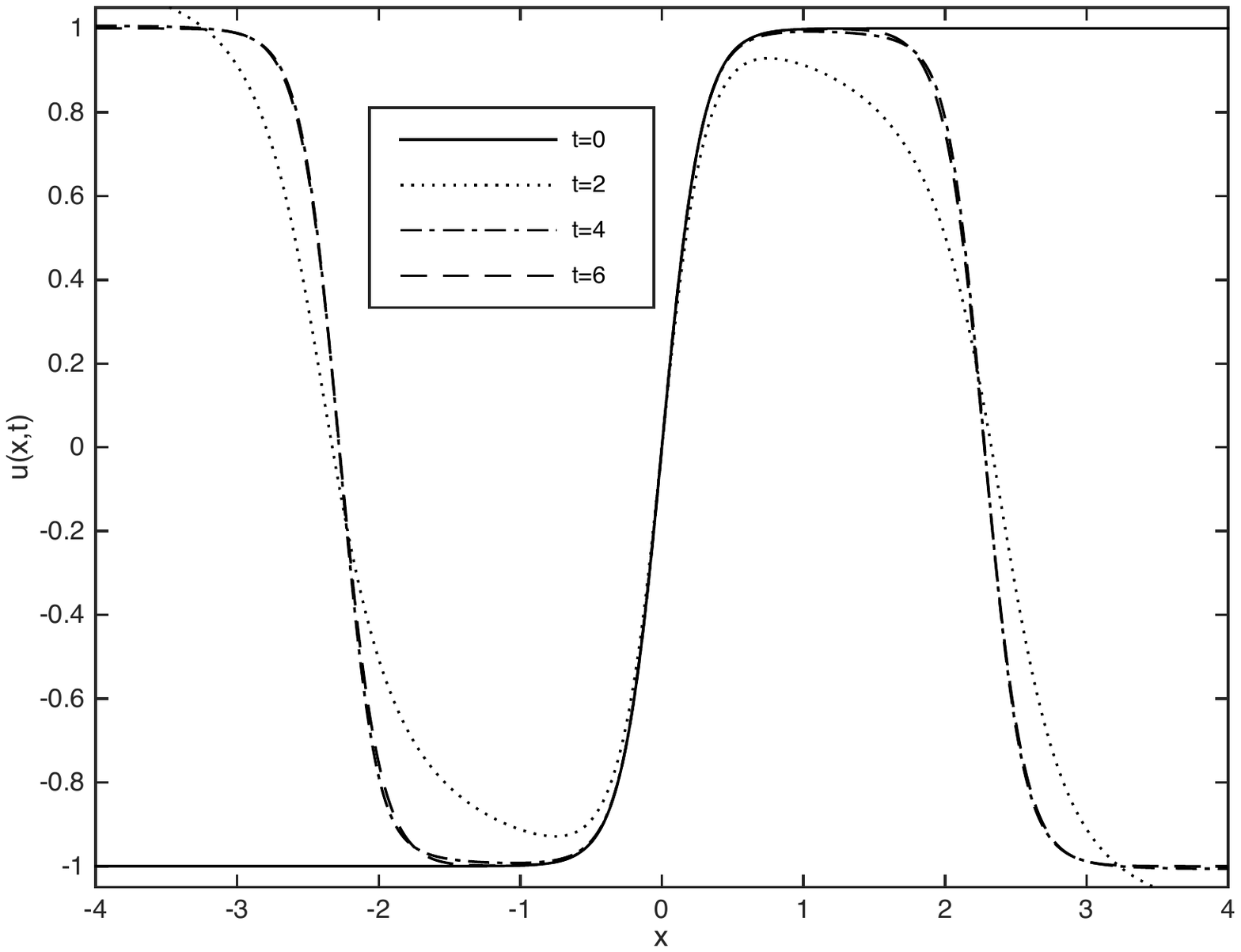}%
\includegraphics[scale=.4]{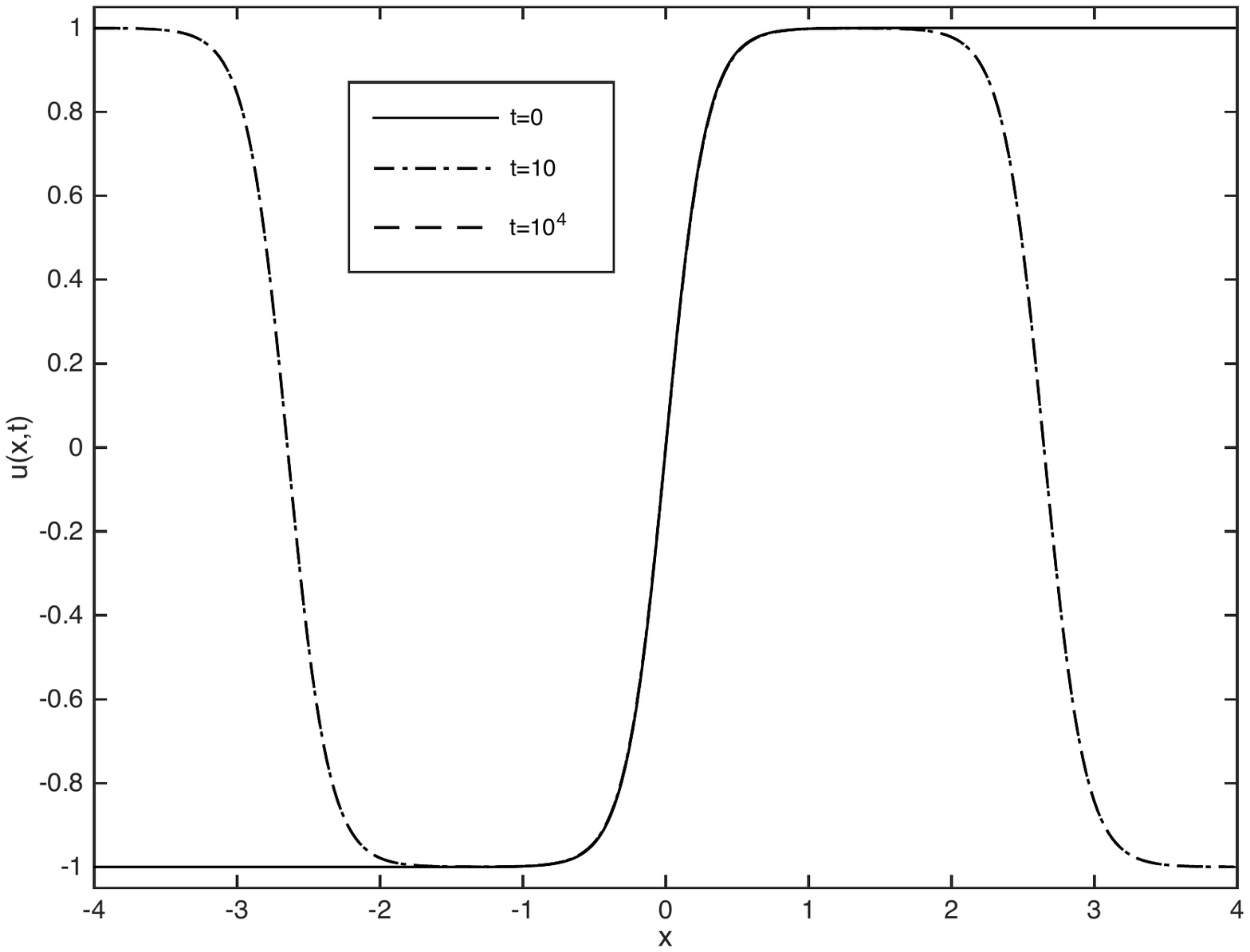}
\caption{Initial data: $u_0(x)=\tanh\left(\frac{x}{\sqrt2\varepsilon}\right)$, $u_1(x)=-x$. The values of constants are: $\varepsilon=0.2, \tau=0.6$.}
\label{u0=tanh eps=0.2}
\end{figure}

\textbf{Example 4}. In conclusion, we show an example where there is a loss of transition. Using the function \eqref{tanh}, we construct an initial profile with $2$ transitions between $1$ and $-1$. Let us consider
$$u_0(x)=\begin{cases}\tanh\left(\displaystyle\frac{x+2}{\sqrt2\varepsilon}\right) \qquad -4\leq x\leq0\\
-\tanh\left(\displaystyle\frac{x-2}{\sqrt2\varepsilon}\right) \qquad 0\leq x\leq4.
\end{cases}$$
The numerical solutions are shown in Figure \ref{loss transition eps=0.01}. Even if the initial profile $u_0$ has $2$ transitions and $\varepsilon$ is small, thanks to the initial velocity $u_1(x)=-x$, a metastable state with one transition is formed.

\begin{figure}[htbp]
\centering
\includegraphics[scale=.4]{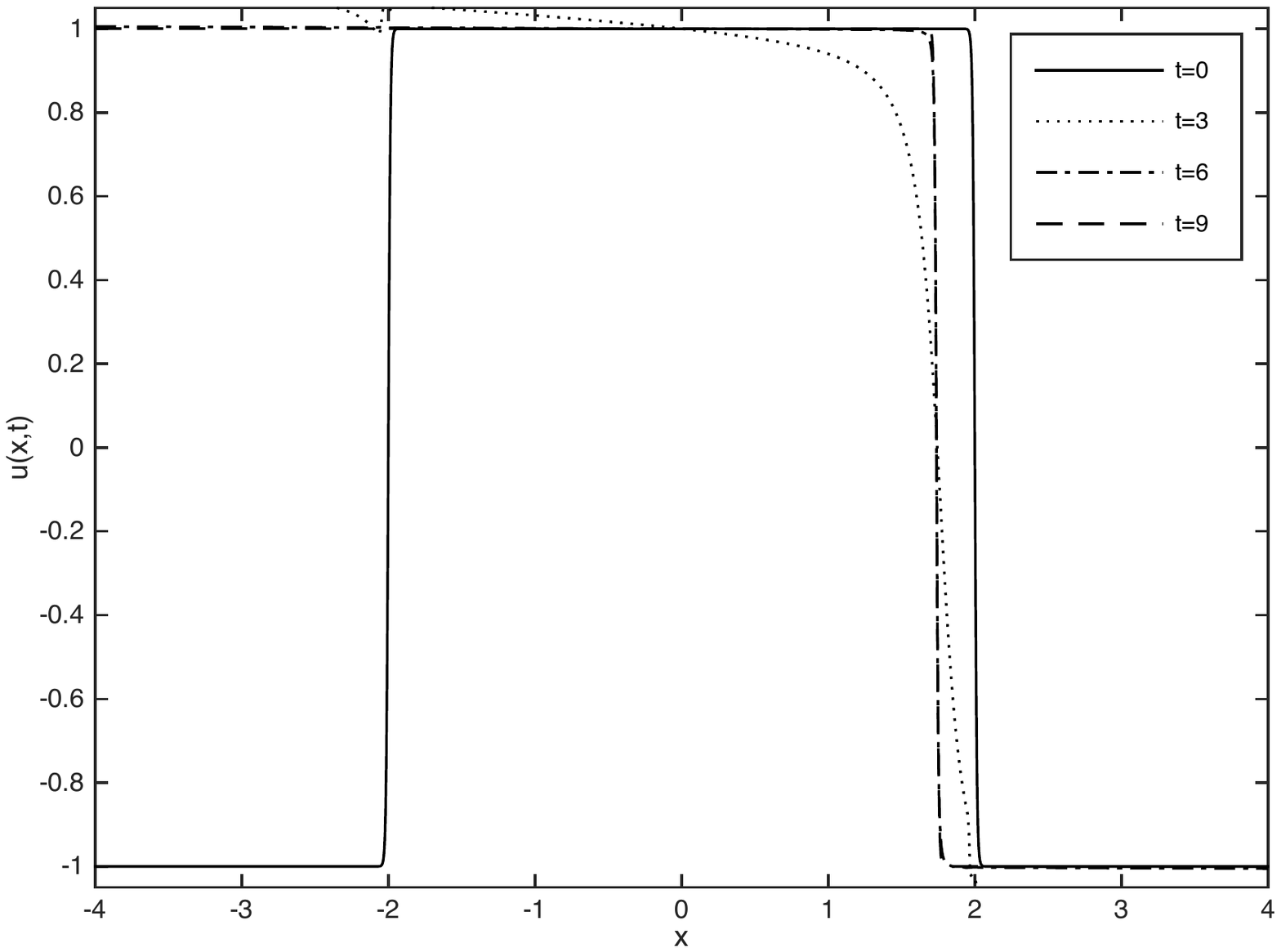}%
\includegraphics[scale=.4]{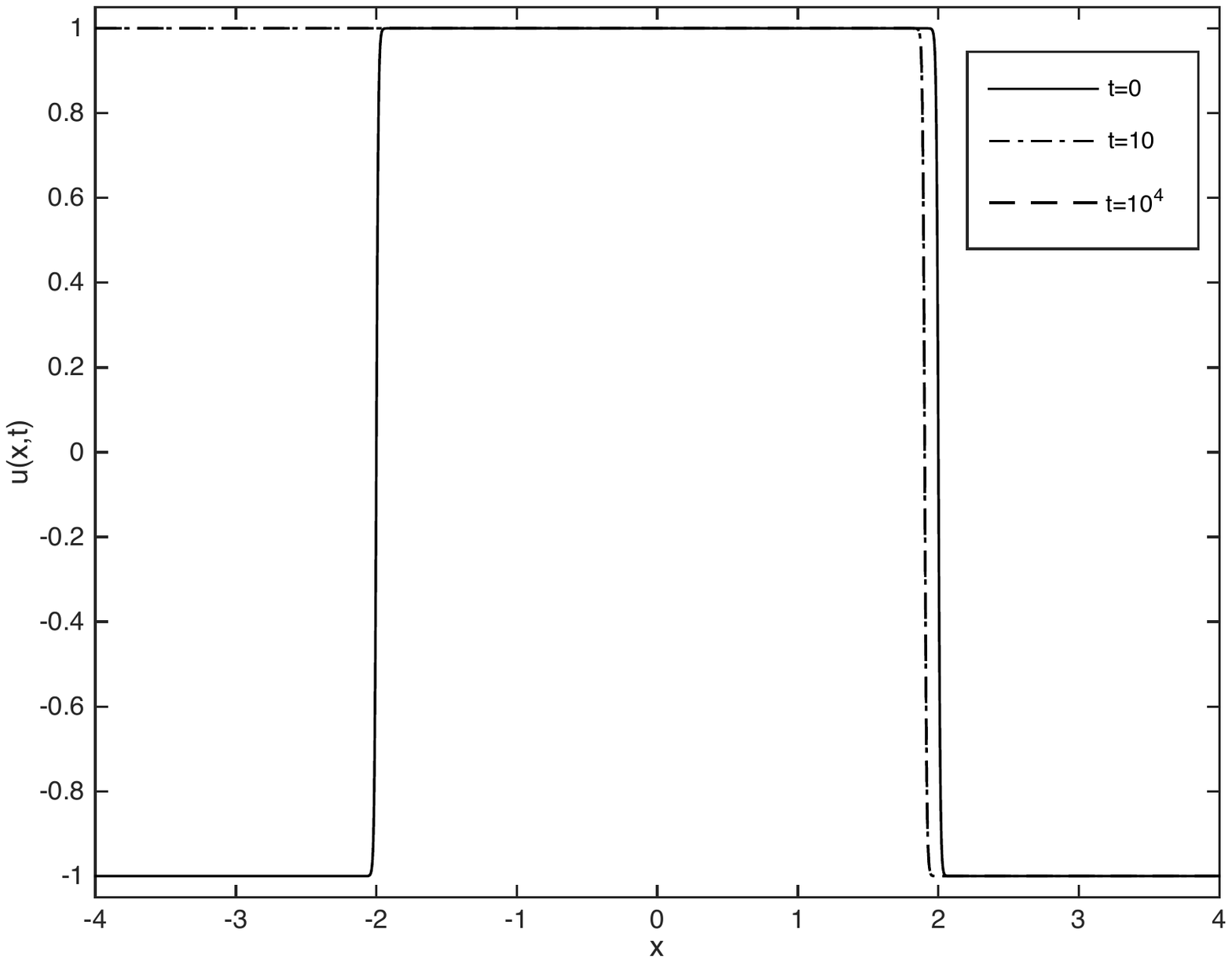}
\caption{Initial data: $u_0(x)$ with two transitions and $u_1(x)=-x$. The values of constants are: $\varepsilon=0.01, \tau=0.9$.}
\label{loss transition eps=0.01}
\end{figure}

\begin{appendices}
\section{Existence and uniqueness}\label{exist-uniq}
In this Appendix we study the problem of existence and uniqueness of solutions of the equation
\begin{equation}\label{allen-cahn-iper-app}
\tau u_{tt}+g(u)u_t=\varepsilon^2u_{xx}+f(u), \qquad \quad (x,t)\in [a,b]\times(0,T),
\end{equation}
with homogeneous Neumann boundary conditions
\begin{equation}\label{Neumann-bordo-app}
u_x(a,t)=u_x(b,t)=0 \qquad\quad \forall t>0
\end{equation}
and initial data 
\begin{equation}\label{cond-iniz-app}
u(x,0)=u_0(x), \qquad u_t(x,0)=u_1(x), \qquad \qquad x\in[a,b].
\end{equation} 
We use the semigroup theory for solutions of differential equations on Hilbert spaces. Specifically, we recall (see Pazy \cite{Pazy}) that, given a Hilbert space $X$, a linear operator $A: D(A)\subset X\rightarrow X$ is \emph{m}-dissipative if 
\begin{description}
\item[i)] A is dissipative, i.e. $\langle Au,u\rangle_X\leq0$, for all $u\in D(A)$;
\item[ii)] for all $f\in X$, there exists $u\in D(A)$ such that $u-Au=f$.
\end{description}
For the sake of completeness, we recall two results on the \emph{m}-dissipative operators. 
\begin{thm}[Lumer-Phillips Theorem]
A linear operator $A$ is the generator of a contraction semigroup $(S(t))_{t\geq0}$ in $X$ if and only if $A$ is m-dissipative with dense domain.
\end{thm}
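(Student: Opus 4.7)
The plan is to prove both implications, leaning on the standard Hille--Yosida machinery.

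\emph{Forward direction.} Assume $A$ generates a contraction semigroup $(S(t))_{t \geq 0}$. First, I would establish that $D(A)$ is dense by the familiar averaging trick: for any $u \in X$, the vectors $\frac{1}{t}\int_0^t S(s)u\,ds$ lie in $D(A)$ and converge to $u$ as $t \to 0^+$ by strong continuity. Next, for $u \in D(A)$, since $\|S(t)u\|^2 \leq \|u\|^2$, differentiating at $t=0$ gives
\[
2\,\mathrm{Re}\langle Au, u\rangle = \frac{d}{dt}\|S(t)u\|^2\Big|_{t=0} \leq 0,
\]
which is dissipativity. Finally, the Laplace-type representation $(I - A)^{-1}v = \int_0^\infty e^{-t} S(t)v\,dt$ defines a bounded operator on all of $X$, so $1 \in \rho(A)$ and in particular $R(I - A) = X$.

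\emph{Reverse direction.} This is the hard implication and proceeds by Yosida approximation. Dissipativity already yields $\|(\lambda I - A)u\| \geq \lambda \|u\|$ for $u \in D(A)$ and $\lambda > 0$, so $\lambda I - A$ is injective with closed range whenever it is surjective. I would bootstrap surjectivity from $\lambda = 1$ to all $\lambda > 0$ by a connectedness argument: the set of $\lambda > 0$ for which $\lambda I - A$ is bijective is both open (Neumann series) and relatively closed in $(0,\infty)$, and contains $\lambda = 1$. The resulting resolvents $R_\lambda := (\lambda I - A)^{-1}$ satisfy $\|\lambda R_\lambda\| \leq 1$.

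With resolvents in hand, I would define $A_\lambda := \lambda A R_\lambda = \lambda^2 R_\lambda - \lambda I$. Each $A_\lambda$ is bounded and so generates a uniformly continuous semigroup $T_\lambda(t) = e^{tA_\lambda}$; the decomposition $A_\lambda = \lambda^2 R_\lambda - \lambda I$ together with $\|\lambda R_\lambda\| \leq 1$ forces $\|T_\lambda(t)\| \leq 1$. The remaining steps are: (i) show $A_\lambda u \to Au$ for $u \in D(A)$ as $\lambda \to \infty$ (first on $D(A)$ via $A_\lambda u = \lambda R_\lambda (Au)$, then extend using density and the uniform bound); (ii) show $(T_\lambda(t)u)_\lambda$ is Cauchy uniformly on compact $t$-intervals for $u \in D(A)$, then extend by density; (iii) define $S(t)u := \lim_\lambda T_\lambda(t)u$, verify the semigroup and contraction properties, and check that its generator agrees with $A$ on $D(A)$; since both operators are m-dissipative, this forced equality on $D(A)$ upgrades to full equality by maximality.

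The main obstacle I expect is step (ii). The trick is that $A_\lambda$ and $A_\mu$ commute, since both are rational functions of $A$ built from resolvents satisfying the resolvent identity. That lets me write
\[
T_\lambda(t)u - T_\mu(t)u = \int_0^t \frac{d}{ds}\bigl(T_\mu(t-s) T_\lambda(s) u\bigr)\,ds = \int_0^t T_\mu(t-s)\,T_\lambda(s)(A_\lambda - A_\mu)u\,ds,
\]
and then bound the right-hand side by $t\,\|A_\lambda u - A_\mu u\|$, using the contractivity of each $T_{\lambda}$, $T_{\mu}$. By step (i), this tends to zero, providing the desired Cauchy estimate.
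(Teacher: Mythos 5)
Your proposal is correct in outline, but note that the paper does not actually prove this statement: Lumer--Phillips is invoked as a classical result (the paper points to Pazy, and to Cazenave--Haraux for the semilinear theory), and the only thing proved in the Appendix is the small lemma that an \emph{m}-dissipative operator on a Hilbert space automatically has dense domain. So there is no in-paper argument to compare against; what you have written is the standard Hille--Yosida/Yosida-approximation proof, and its skeleton is sound. A few points you would need to flesh out to make it complete: in the forward direction, justify the Laplace representation $(I-A)^{-1}v=\int_0^\infty e^{-t}S(t)v\,dt$ (convergence of the integral, that it maps into $D(A)$ and inverts $I-A$), which is exactly what gives surjectivity of $I-A$ as required by the paper's definition of \emph{m}-dissipativity (only $\lambda=1$ is demanded there, so your bootstrap from $\lambda=1$ to all $\lambda>0$ is genuinely needed and correctly included); in that bootstrap, the relative closedness of the set of good $\lambda$ uses that $A$ is closed, which you should extract from the boundedness of $(I-A)^{-1}$ (alternatively, the Neumann-series step alone lets you reach $(0,2\lambda_0)$ from $\lambda_0$ and iterate, avoiding the closedness argument); and in step (iii), you should verify strong continuity of the limit semigroup (uniform convergence on compact $t$-intervals for $u\in D(A)$, then density plus the uniform contraction bound) and identify the generator via $S(t)u-u=\int_0^t S(s)Au\,ds$ before applying the maximality argument, which is itself correct since $I-A$ surjective and $I-B$ injective force $A=B$ once $A\subset B$. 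None of these are gaps in the idea, only routine details; as a proof of the quoted theorem your route is the canonical one.
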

\begin{lem}\label{dense-dom}
Let $X$ be a Hilbert space. If $A: D(A)\subset X\rightarrow X$ is m-dissipative, then $D(A)$ is dense in $X$.
\end{lem}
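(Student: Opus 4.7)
The plan is to reduce the density of $D(A)$ to the statement that its orthogonal complement is trivial: since $A$ is linear, $D(A)$ is a linear subspace of the Hilbert space $X$, so $\overline{D(A)} = (D(A)^\perp)^\perp$, and density is therefore equivalent to $D(A)^\perp = \{0\}$. I would fix an arbitrary $w \in D(A)^\perp$ and argue that $w$ must vanish.

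The key move is to apply property \textbf{ii)} of m-dissipativity to $w$ itself: this produces an element $u \in D(A)$ satisfying the resolvent identity $u - Au = w$. Because $u$ lies in $D(A)$ while $w$ is orthogonal to every element of $D(A)$, pairing both sides of this identity with $u$ yields
$$0 = \langle w, u\rangle_X = \langle u - Au, u\rangle_X = \|u\|_X^2 - \langle Au, u\rangle_X.$$
At this stage property \textbf{i)} enters: dissipativity gives $\langle Au, u\rangle_X \leq 0$, which combined with the displayed identity forces $\|u\|_X^2 \leq 0$ and hence $u = 0$. Linearity of $A$ then propagates this back to $w$, since $w = u - Au = 0 - A(0) = 0$.

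There is no substantive obstacle here; the proof simply uses the two defining properties of m-dissipativity in complementary roles, with \textbf{ii)} furnishing a single well-chosen element of $D(A)$ linked to $w$ through the resolvent equation, and \textbf{i)} converting the resulting Hilbertian identity into a sign constraint that pins $u$ down to zero. The only subtlety worth flagging is the tacit use of linearity at the very end to conclude $A(0)=0$, which ensures that $u=0$ does propagate back to $w=0$.
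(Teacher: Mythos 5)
Your proposal is correct and is essentially the paper's own argument: take an element orthogonal to $D(A)$, solve the resolvent equation $u-Au=w$ via property \textbf{ii)}, pair with $u$, and use dissipativity to force $u=0$ and hence $w=0$. The only differences are cosmetic (you spell out the reduction to $D(A)^\perp=\{0\}$ and the final step $w=0-A(0)=0$, which the paper leaves implicit).
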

\begin{proof}
Let $x\in X$ such that $\langle x,z\rangle_X=0$ for all $z\in D(A)$ and let $u\in D(A)$ such that $u-Au=x$. Then 
$$0=\langle x,u\rangle_X=\langle u-Au,u\rangle_X.$$
Hence, $\|u\|^2_X=\langle Au,u\rangle_X\leq0$. It follows that $u=x=0$ and so $D(A)$ is dense in $X$.
\end{proof}
Setting $\y=(u,v)=(u,\partial_t u)$, we rewrite \eqref{allen-cahn-iper-app} as a first order evolution equation 
\begin{equation}\label{eq-primo-ordine}
\y_t=A\y+\Phi(\y),
\end{equation}
where 
\begin{equation}\label{A-Fi}
A\y:=\left(\begin{array}{cc} 0 & 1 \\ \varepsilon^2\tau^{-1}\partial^2_x & 0 \end{array} \right)\y-\y\qquad \mbox{and}\qquad \Phi(\y):=\y+\dfrac1\tau\left(\begin{array}{c} 0 \\ f(u)-g(u)v \end{array}\right).
\end{equation}
The unknown $\y$ is considered as a function of a real (positive) variable $t$ with values on the function space $X=H^1([a,b])\times L^2(a,b)$ with scalar product
$$\langle(u,v),(w,z)\rangle_X:= \int_a^b (\varepsilon^2u_xw_x+\tau uw+\tau vz)dx,$$ 
that is equivalent to the usual scalar product in $H^1([a,b])\times L^2(a,b)$. \par
We use the notation $H^k:=H^k([a,b])$ and $L^k:=L^k(a,b)$, for $k=1,2$.
\begin{prop}\label{A-m-dissipativo}
The linear operator $A:D(A)\subset X\rightarrow X$ defined by \eqref{A-Fi} with 
\begin{equation}\label{dominio-A}
D(A)=\left\{(u,v)\in H^2\times H^1 : u_x(a)=u_x(b)=0\right\},
\end{equation}
is m-dissipative with dense domain.
\end{prop}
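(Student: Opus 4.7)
The plan is to verify the two defining properties of an m-dissipative operator directly; density of $D(A)$ in $X$ will then follow for free from Lemma \ref{dense-dom}. So only dissipativity (i) and the range condition (ii) require explicit work.

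For dissipativity, I compute $\langle A\y,\y\rangle_X$ using $A\y = (v-u,\,\varepsilon^2\tau^{-1}u_{xx}-v)$. Expanding against the inner product of $X$ gives
\[
\int_a^b\!\left[\varepsilon^2 v_x u_x - \varepsilon^2 u_x^2 + \tau vu - \tau u^2 + \varepsilon^2 u_{xx} v - \tau v^2\right]dx.
\]
An integration by parts turns $\int_a^b \varepsilon^2 u_{xx} v\,dx$ into $-\int_a^b \varepsilon^2 u_x v_x\,dx$, with the boundary term killed by the Neumann condition $u_x(a) = u_x(b) = 0$ built into $D(A)$. After this cancellation what remains is
\[
-\varepsilon^2\|u_x\|_{L^2}^2 - \tau\|u\|_{L^2}^2 - \tau\|v\|_{L^2}^2 + \tau\int_a^b uv\,dx,
\]
and Young's inequality on the cross term yields $\langle A\y,\y\rangle_X \le -\varepsilon^2\|u_x\|_{L^2}^2 - \tfrac{\tau}{2}\|u\|_{L^2}^2 - \tfrac{\tau}{2}\|v\|_{L^2}^2 \le 0$. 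The shift $-\y$ in the definition of $A$ is precisely what forces this negativity without any exponential rescaling of time.

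For the range condition, given $(f_1,f_2)\in X$ I need $(u,v)\in D(A)$ such that $(I-A)(u,v) = (f_1,f_2)$. Componentwise this reads $2u - v = f_1$ and $2v - \varepsilon^2\tau^{-1}u_{xx} = f_2$. Substituting $v = 2u - f_1$ from the first equation into the second reduces the problem to the scalar elliptic equation
\[
-\varepsilon^2\tau^{-1}u_{xx} + 4u = f_2 + 2f_1, \qquad u_x(a)=u_x(b)=0.
\]
Since $f_2+2f_1\in L^2$, I apply the Lax-Milgram theorem to the bilinear form $B[u,w] = \int_a^b(\varepsilon^2\tau^{-1}u_x w_x + 4uw)\,dx$ on $H^1$, which is manifestly bounded and coercive, to obtain a unique weak solution $u\in H^1$. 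Reading $u_{xx}$ off the equation lifts this to $u\in H^2$, and varying $w\in H^1$ in the weak formulation forces the Neumann condition at the endpoints. Then $v := 2u - f_1 \in H^1$ (using $f_1\in H^1$), so $(u,v)\in D(A)$ as required.

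The argument is essentially routine; the only place requiring a moment of care is the cross term $\tau\int uv\,dx$ in the dissipativity calculation, which must be absorbed cleanly into the negative quadratic terms, and the observation in the range step that $X = H^1\times L^2$ (not $L^2\times L^2$) is exactly what makes $v = 2u - f_1$ lie in $H^1$. Combining (i) and (ii) with Lemma \ref{dense-dom} concludes the proof.
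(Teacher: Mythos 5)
Your proof is correct and follows essentially the same route as the paper: the identical computation of $\langle A\y,\y\rangle_X$ with integration by parts and the Neumann condition (the paper simply notes $u^2-uv+v^2\geq 0$ where you invoke Young's inequality), the same reduction of the range condition to the scalar elliptic problem $4u-\varepsilon^2\tau^{-1}u_{xx}=2f_1+f_2$ solved by Lax--Milgram with elliptic regularity and recovery of the boundary conditions, and the same appeal to Lemma \ref{dense-dom} for density.
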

\begin{proof}
For all $\y=(u,v)\in D(A)$, using integration by parts, we obtain
$$\langle A\y,\y\rangle_X = \int_a^b\left(-\varepsilon^2u_x^2-\tau \left(u^2-uv+v^2\right)\right)dx\leq0.$$
Hence $A$ is dissipative. Now, let us show that for all $\x=(h,k)\in X$ there exists $\y=(u,v)\in D(A)$ such that $\y-A\y=\x$. This equation is equivalent to the system
\begin{equation}\label{sistema-u-v}
\begin{cases}
2u-v=h \\
2v-\frac{\varepsilon^2}\tau u_{xx}=k 
\end{cases},
\end{equation}
with $h\in H^1$ and $k\in L^2$. By \eqref{sistema-u-v} it follows that $u$ satisfies 
\begin{equation}\label{u-ellittico}
4u-\frac{\varepsilon^2}\tau u_{xx}=2h+k.
\end{equation}
Hence we must show that there exists $u\in H^2$ with $u_x(a)=u_x(b)=0$ satisfying \eqref{u-ellittico}. The weak formulation of \eqref{u-ellittico} is given by
\begin{equation}\label{u-ellittico-debole}
\alpha(u,w):=4\tau\int_a^buw\, dx+\varepsilon^2\int_a^bu_x w_x\,dx=\tau\int_a^b(2h+k)w\,dx=:\varphi(w), 
\end{equation}
for all $w\in H^1$. Applying Lax-Milgram Theorem in $H^1$ to the bilinear form $\alpha$ and the linear form $\varphi$, we obtain that there exists a solution $u\in H^1$ of \eqref{u-ellittico-debole}. \par
Identity \eqref{u-ellittico-debole} gives also
$$\int_a^bu_xw_x\,dx=\frac{\tau}{\varepsilon^2}\int_a^b(2h+k-4u)w\,dx \qquad \quad \forall\, w\in C^1_c([a,b])$$
and so, since $2h+k-4u\in L^2$, we have $u\in H^2$.\par
Finally, we have to show that the boundary conditions are satisfied. To this aim, we observe that
\begin{equation}\label{u-debole-secondo}
\int_a^b(4\tau u-\varepsilon^2u_{xx}-\tau(2h+k))w\,dx+\varepsilon^2(u_x(b)w(b)-u_x(a)w(a))=0 
\end{equation}
for all $w\in H^1$. By choosing $w\in H^1_0([a,b])$ in \eqref{u-debole-secondo}, we obtain that $u$ satisfies \eqref{u-ellittico} almost everywhere and therefore $u_x(b)w(b)-u_x(a)w(a)=0$ for all $w\in H^1$. It follows that $u_x(a)=u_x(b)=0$.\par
Next, solving the first equation of the system \eqref{sistema-u-v}, we obtain $v\in H^1$. Therefore, $A$ is \emph{m}-dissipative. The domain $D(A)$ is dense for Lemma \ref{dense-dom}.
\end{proof}
From Proposition \ref{A-m-dissipativo} and Lumer-Phillips Theorem it follows that the operator $A:D(A)\subset X\rightarrow X$ defined by \eqref{A-Fi}-\eqref{dominio-A} is the generator of a contraction semigroup $(S(t))_{t\geq0}$ in $X$.\par
Now, we study the well-posedness of the Cauchy problem for the semilinear equation \eqref{eq-primo-ordine}. To do this, we use some results from Cazenave and Haraux \cite[Chapter 4]{Cazenave}.\par
In the following, we suppose that $\Phi:X\rightarrow X$ is a Lipschitz continuous function on bounded subsets of $X$. We denote by $L(M)$ the Lipschitz constant of $\Phi$ in $B_M$ for $M>0$, where $B_M$ is the ball of center $0$ and of radius $M$.\par
Given $\x\in X$, we look for $T>0$ and a classical solution 
$$\y\in C([0,T],D(A))\cap C^1([0,T],X)$$ 
of the problem:
\begin{equation}\label{prob-Cauchy-classical}
\begin{cases}
\y_t(t)=A\y(t)+\Phi(\y(t)),\qquad \forall\,t\in[0,T];\\
\y(0)=\x.
\end{cases}
\end{equation}
It can be shown that any classical solution $\y$ of \eqref{prob-Cauchy-classical} is also a \emph{mild solution} on $[0,T]$, that is a function $\y\in C([0,T],X)$ solving the problem 
\begin{equation}\label{weak-form}
\y(t)=S(t)\x+\int_0^t S(t-s)\Phi(\y(s))ds, \qquad \forall\,t\in[0,T].
\end{equation}
The following result states that such a solution exists and it is unique for any $\x\in X$. 
\begin{thm}[see Cazenave-Haraux \cite{Cazenave}, Theorem $4.3.4$]\label{esist-unic-altern}
There exists a function $T:X\rightarrow(0,\infty]$ with the following properties: for all $\x\in X$, there exists ${\y\in C([0,T(\x)),X)}$ such that for all $0<T<T(\x)$, $\y$ is the unique solution of \eqref{weak-form} in $C([0,T],X)$. In addition, 
$$2L(\|\Phi(0)\|_X+2\|\y(t)\|_X)\geq \frac{1}{T(\x)-t}-2,$$
for all $t\in[0,T(\x))$. In particular, we have the following alternatives:
\begin{equation}\label{alternatives-T(x)}
(a) \quad T(\x)=\infty \qquad \mbox{or} \qquad (b)\quad T(\x)<\infty \quad \mbox{and}\quad\lim_{t\uparrow T(\x)}\|\y(t)\|_X=\infty.
\end{equation}
\end{thm}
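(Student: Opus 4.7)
The plan is to prove the theorem by a contraction-mapping argument applied to the integral formulation \eqref{weak-form}, followed by a Gronwall-type uniqueness argument and a standard continuation/blow-up dichotomy. Throughout I would exploit that $(S(t))_{t\geq0}$ is a contraction semigroup, so $\|S(t)\|_{\mathcal L(X)}\leq 1$ for all $t\geq0$.

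First I would set up local existence. Fix $\x\in X$ and $M>\|\x\|_X$, and let $L=L(M)$ be the Lipschitz constant of $\Phi$ on $B_M$; note that on $B_M$ one has $\|\Phi(\y)\|_X\leq \|\Phi(0)\|_X+LM$. For $T>0$ to be chosen, let $E_{M,T}$ be the closed subset of $C([0,T],X)$ consisting of functions $\y$ with $\|\y(t)\|_X\leq M$ for all $t$, equipped with the sup norm. Define
$$\mathcal{F}(\y)(t):=S(t)\x+\int_0^t S(t-s)\Phi(\y(s))\,ds.$$
Contractivity of the semigroup gives $\|\mathcal{F}(\y)(t)\|_X\leq \|\x\|_X+T(\|\Phi(0)\|_X+LM)$ and $\|\mathcal{F}(\y_1)(t)-\mathcal{F}(\y_2)(t)\|_X\leq LT\sup_{[0,T]}\|\y_1-\y_2\|_X$. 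Choosing $T$ small enough that both $\|\x\|_X+T(\|\Phi(0)\|_X+LM)\leq M$ and $LT<1$ hold, $\mathcal{F}$ stabilizes $E_{M,T}$ and is a strict contraction, so Banach's theorem yields a unique fixed point, i.e.\ a mild solution on $[0,T]$.

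Next I would treat uniqueness and continuation. If $\y_1,\y_2$ are any two mild solutions on $[0,T']$ with the same initial datum, bounded by some common $M'$, then
$$\|\y_1(t)-\y_2(t)\|_X\leq L(M')\int_0^t\|\y_1(s)-\y_2(s)\|_X\,ds,$$
so Gronwall forces $\y_1\equiv\y_2$. This allows me to glue local solutions and define the maximal existence time $T(\x):=\sup\{T>0:\text{a mild solution exists on }[0,T]\}$ together with a well-defined maximal solution $\y\in C([0,T(\x)),X)$. The blow-up alternative \eqref{alternatives-T(x)} follows: if $T(\x)<\infty$ and $\limsup_{t\uparrow T(\x)}\|\y(t)\|_X<\infty$, then restarting the local existence argument at some $t_0$ close to $T(\x)$ from the datum $\y(t_0)$, with $M$ slightly larger than the bound, produces a continuation past $T(\x)$, contradicting maximality.

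Finally, the main technical point is the quantitative lower bound on the remaining lifetime. Starting the local argument at time $t\in[0,T(\x))$ from the datum $\y(t)$, I would choose $M=2\|\y(t)\|_X$ (so that $M>\|\y(t)\|_X$) and observe that the two requirements on $T$ in the contraction step reduce, after replacing $LM$ by $L(M)(\|\Phi(0)\|_X+M)$ in the stabilization inequality, to a single condition of the shape $T\bigl(1+L(\|\Phi(0)\|_X+2\|\y(t)\|_X)\bigr)\leq\tfrac12$. Rearranging this bound on the maximal step one can take from time $t$, and using that $T(\x)-t$ is at least as large as this step, gives
$$\frac{1}{T(\x)-t}\leq 2+2L\bigl(\|\Phi(0)\|_X+2\|\y(t)\|_X\bigr),$$
which is the claimed estimate. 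The hard part is really the bookkeeping: tracking the constants through the fixed-point step so that the final inequality comes out with the precise factors of $2$ in the argument of $L$; once that is organized, the blow-up alternative is immediate, since $T(\x)<\infty$ together with a bound on $\|\y(t)\|_X$ forces the right-hand side above to stay bounded as $t\uparrow T(\x)$, contradicting the blow-up of the left-hand side.
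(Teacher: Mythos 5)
You should note first that the paper itself offers no proof of this statement: it is quoted verbatim from Cazenave--Haraux (Theorem 4.3.4) and used as a black box, so the only meaningful comparison is with the standard proof in that reference. Your skeleton is exactly that strategy -- Banach fixed point for the mild formulation \eqref{weak-form} using $\|S(t)\|_{\mathcal{L}(X)}\leq1$, Gronwall for uniqueness, gluing to a maximal solution, and a uniform local existence time to get the continuation/blow-up dichotomy -- and the qualitative parts of your argument are sound.

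The genuine gap is in the quantitative lifetime bound, which is the only nontrivial part of the statement and which you yourself defer to ``bookkeeping''. With your choice $M=2\|\y(t)\|_X$, the invariance condition for restarting from the datum $\y(t)$ reads $\|\y(t)\|_X+T\bigl(\|\Phi(0)\|_X+L(M)M\bigr)\leq M$, whose slack is only $\|\y(t)\|_X$; if $\|\y(t)\|_X$ is small compared with $\|\Phi(0)\|_X$ this forces $T\lesssim \|\y(t)\|_X/\|\Phi(0)\|_X$, which can be far smaller than $\bigl(2+2L(\|\Phi(0)\|_X+2\|\y(t)\|_X)\bigr)^{-1}$, so your ``single condition'' $T\bigl(1+L(\|\Phi(0)\|_X+2\|\y(t)\|_X)\bigr)\leq\tfrac12$ does not imply invariance (the suggested replacement of $L(M)M$ by $L(M)(\|\Phi(0)\|_X+M)$ only makes the requirement harder; the reduction would in fact need $\|\Phi(0)\|_X(1+L)\leq 2\|\y(t)\|_X$, which may fail). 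The repair is to take the ball radius $M=\|\Phi(0)\|_X+2\|\y(t)\|_X$ -- precisely the quantity appearing inside $L$ in the statement. Then with $T=\bigl(2+2L(M)\bigr)^{-1}$ one has $\|\Phi(0)\|_X+L(M)M\leq\bigl(2+2L(M)\bigr)\bigl(\|\Phi(0)\|_X+\|\y(t)\|_X\bigr)$, hence $\|\y(t)\|_X+T\bigl(\|\Phi(0)\|_X+L(M)M\bigr)\leq M$, and $TL(M)<\tfrac12$, so the fixed point exists on an interval of length $T$ and $T(\x)-t\geq\bigl(2+2L(\|\Phi(0)\|_X+2\|\y(t)\|_X)\bigr)^{-1}$, which is the stated inequality. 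A minor related point: your continuation argument as phrased (with $\limsup$) only yields $\limsup_{t\uparrow T(\x)}\|\y(t)\|_X=\infty$; the full limit in alternative $(b)$ should be deduced either by running the restart along a sequence realizing the $\liminf$, or, as you indicate at the end, directly from the corrected quantitative bound evaluated along any sequence $t_n\uparrow T(\x)$.
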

From Theorem \ref{esist-unic-altern} it follows that for all $\x\in X$ the problem \eqref{prob-Cauchy-classical} has a unique mild solution $\y\in C([0,T(\x)),X)$. Regarding the regularity of the solutions, we have that if $\x\in D(A)$, then $\y$ is a classical solution (see Cazenave and Haraux \cite[Proposition 4.3.9]{Cazenave}). Finally, the solution $\y(t)$ depends continuously on the initial data $\x\in X$, uniformly for all $t\in[0,T]$:
\begin{prop}[\cite{Cazenave}, Proposition 4.3.7]\label{dip-con-dat-iniz}
Following the notation of Theorem \ref{esist-unic-altern}, we have the following properties:
\begin{description}
\item[i)] $T: X\rightarrow (0,\infty]$ is lower semicontinuous;
\item[ii)] if $\x_n\rightarrow\x$ in $X$ and if $T<T(\x)$, then $\y_n\rightarrow \y$ in $C([0,T],X)$, where $\y_n$ and $\y$ are the solutions of \eqref{weak-form} corresponding to the initial data $\x_n$ and $\x$.
\end{description}
\end{prop}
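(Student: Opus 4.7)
The plan is to prove part (ii) directly via a continuous-dependence Gronwall estimate and then read off part (i) from the uniform lifespan bound produced by the same estimate. Both parts rely only on two ingredients: $S(t)$ is a contraction semigroup, hence $\|S(t)\|_{L(X)}\leq 1$, and $\Phi$ is Lipschitz on every ball, with constant $L(M)$ on the ball $B_M$.

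For (ii), fix $\x\in X$ and $T<T(\x)$. Since $\y\in C([0,T(\x)),X)$, the orbit on $[0,T]$ is bounded, so I set
$$M:=2\sup_{s\in[0,T]}\|\y(s)\|_X+1,\qquad L:=L(M),$$
and for each $\x_n$ define
$$T_n^\ast:=\sup\bigl\{t\in[0,T]\cap[0,T(\x_n)):\|\y_n(s)\|_X\leq M\text{ for all } s\in[0,t]\bigr\}.$$
Subtracting the mild formulations \eqref{weak-form} for $\y_n$ and $\y$, using $\|S(t-s)\|\leq 1$, and applying the Lipschitz estimate for $\Phi$ (valid as long as both solutions remain in $B_M$), I get
$$\|\y_n(t)-\y(t)\|_X\leq\|\x_n-\x\|_X+L\int_0^t\|\y_n(s)-\y(s)\|_X\,ds,\qquad t\in[0,T_n^\ast],$$
and Gronwall yields $\|\y_n(t)-\y(t)\|_X\leq\|\x_n-\x\|_X e^{LT}$ on $[0,T_n^\ast]$. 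The key bootstrap step is then: for $n$ so large that $\|\x_n-\x\|_X e^{LT}<1/2$, this forces $\|\y_n(s)\|_X<M/2+1/2<M$ strictly on $[0,T_n^\ast]$. By continuity of $\y_n$ this strict inequality rules out $T_n^\ast$ being an interior point of $[0,T]$ at which $\|\y_n\|_X$ first hits $M$; and the case $T_n^\ast=T(\x_n)<T$ is excluded by the blow-up alternative (b) in \eqref{alternatives-T(x)}, since $\y_n$ would then be bounded by $M$ while simultaneously blowing up. Hence $T_n^\ast=T$ and $T(\x_n)>T$, so the Gronwall bound is valid on all of $[0,T]$ and $\y_n\to\y$ in $C([0,T],X)$, proving (ii).

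For (i), the argument above has produced, for every $T<T(\x)$, a neighborhood of $\x$ on which $T(\cdot)>T$. This exactly says $\liminf_{\x_n\to\x}T(\x_n)\geq T$ for every $T<T(\x)$, whence $\liminf_{\x_n\to\x}T(\x_n)\geq T(\x)$, which is lower semicontinuity of $T$. The main delicate point is the bootstrap step: without exploiting the blow-up alternative \eqref{alternatives-T(x)}, one cannot preclude the scenario $T_n^\ast=T(\x_n)<T$, and the raw Gronwall estimate would only control the difference on an a priori unknown interval. The blow-up alternative is precisely what converts a pointwise bound on $\y_n$ into a statement about the lifespan $T(\x_n)$, and it is the conversion from the a priori control to the lifespan lower bound that makes both statements work simultaneously.
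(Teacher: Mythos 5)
Your proof is correct. Note that the paper itself gives no argument for this proposition: it is quoted verbatim from Cazenave--Haraux (Proposition 4.3.7), so there is no in-paper proof to compare against. Your Gronwall-plus-bootstrap argument, with the blow-up alternative (b) of \eqref{alternatives-T(x)} converting the a priori bound $\|\y_n\|_X\leq M$ into the lifespan statement $T(\x_n)>T$, is essentially the standard textbook proof of this result; an alternative (and the route closer to the cited source) is to get part (i) directly from the quantitative lifespan estimate $2L(\|\Phi(0)\|_X+2\|\y(t)\|_X)\geq \frac{1}{T(\x)-t}-2$ stated in Theorem \ref{esist-unic-altern}, which bounds $T(\tilde\x)$ from below uniformly on balls, but the two routes are equivalent here. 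Two small points you use implicitly and could state: for $n$ large one has $\|\x_n\|_X<M$, so $T_n^\ast$ is well defined and strictly positive; and the Lipschitz constant $L(M)$ is applicable in the Duhamel difference because both $\y(s)$ and $\y_n(s)$ remain in $B_M$ for $s\leq T_n^\ast$ (the orbit of $\y$ by the choice of $M$, that of $\y_n$ by the definition of $T_n^\ast$, with the endpoint included by continuity when $T_n^\ast<T(\x_n)$). With these remarks the argument is complete.
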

In order to apply Theorem \ref{esist-unic-altern} and Proposition \ref{dip-con-dat-iniz}, the function $\Phi$ defined by \eqref{A-Fi} must be a Lipschitz continuous function on bounded subsets of $X$. This is guaranteed if $f,g$ are locally Lipschitz continuous on $\mathbb{R}$.\par
Indeed, for all $\y_1=(u_1,v_1)$, $\y_2=(u_2,v_2)\in X$ we have
\begin{align*}
\|\Phi(\y_1)-\Phi(\y_2)\|_X \leq &\; \|\y_1-\y_2\|_X \\
& +\tau^{-1/2}\left(\|f(u_1)-f(u_2)\|_{L^2}+\|g(u_1)v_1-g(u_2)v_2\|_{L^2}\right).
\end{align*}
Let $M:=\max\{\|\y_1\|_X,\|\y_2\|_X\}$. We have that
\begin{align*}
\|g(u_1)v_1-g(u_2)v_2\|_{L^2}\leq\,& \|g(u_1)(v_1-v_2)\|_{L^2}+\|v_2(g(u_1)-g(u_2))\|_{L^2}\\
\leq\,& \|g(u_1)\|_{L^\infty}\|v_1-v_2\|_{L^2}+C_2\|u_1-u_2\|_{H^1},
\end{align*}
where the last inequality holds because $g$ is locally Lipschitz continuous and $H^1([a,b])\subset L^\infty([a,b])$ with continuous inclusion. From this inequality and $\|f(u_1)-f(u_2)\|_{L^2}\leq L\|u_1-u_2\|_{L^2}$, it follows that there exists a constant $C(M)$ (depending on $M$) such that
$$\|\Phi(\y_1)-\Phi(\y_2)\|_X\leq C(M)\|\y_1-\y_2\|_X.$$
Therefore, we can say that for all $\x\in X$ the problem \eqref{prob-Cauchy-classical}, with $A$ and $\Phi$ defined by \eqref{A-Fi}-\eqref{dominio-A} and $f,g$ locally Lipschitz continuous, has a unique mild solution on $[0,T(\x))$. In particular, if $\x\in D(A)$, then $\y$ is a classical solution.\par 
Now, let us seek assumptions on $f$ and $g$ such that any solution is global, i.e. $T(\x)=\infty$ for all $\x\in X$. The alternatives $(a)-(b)$ of \eqref{alternatives-T(x)} mean that the global existence of the solution $\y=(u,u_t)$ is equivalent to the existence of an \emph{a priori} estimate of $\|(u,u_t)\|_X$ on $[0,T(\x))$. We show that, under appropriate assumptions on $f$ and $g$, $(b)$ cannot occur by using energy estimates. \par
We define the energy
\begin{equation}\label{energy}
E[u,u_t](t):=\int_a^b\left[\frac\tau2 u^2_t(x,t)+\frac{\varepsilon^2}2 u^2_x(x,t)+F(u(x,t))\right]dx,
\end{equation}
where $\displaystyle F(u)=-\int_0^u f(s)ds$. Observe that the energy \eqref{energy} is well-defined for mild solutions $(u,u_t)\in C([0,T],X)$.
\begin{prop}\label{prop-var-ener}
We consider the problem \eqref{prob-Cauchy-classical} with $A$ and $\Phi$ defined by \eqref{A-Fi}-\eqref{dominio-A} and $f,g$ locally Lipschitz continuous. If $\y=(u,u_t)\in C([0,T],X)$ is a mild solution, then 
\begin{equation}\label{variazione-energia}
\int_0^T\int_a^b g(u)u^2_t dxdt=E[u,u_t](0)-E[u,u_t](T).
\end{equation}
\end{prop}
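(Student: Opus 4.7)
The plan is to first prove the identity for classical solutions by the standard multiplier argument, and then extend it to mild solutions by approximation, exploiting the density of $D(A)$ in $X$ and the continuous dependence provided by Proposition \ref{dip-con-dat-iniz}.

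\textbf{Step 1 (classical solutions).} Assume first that $\mathbf{x}=(u_0,u_1)\in D(A)$, so that $\mathbf{y}=(u,u_t)\in C([0,T],D(A))\cap C^1([0,T],X)$ is a classical solution. In particular $u\in C([0,T],H^2)$ with $u_x(a,t)=u_x(b,t)=0$ and $u_t\in C([0,T],H^1)$, which is enough regularity to multiply \eqref{allen-cahn-iper-app} by $u_t$ and integrate over $[a,b]$. Using integration by parts in the $\varepsilon^2 u_{xx}u_t$ term together with the Neumann boundary condition, and writing $f(u)u_t=-\frac{d}{dt}F(u)$, one obtains
\begin{equation*}
\frac{d}{dt}E[u,u_t](t)=-\int_a^b g(u)u_t^2\,dx.
\end{equation*}
Both sides are continuous in $t$, so integrating from $0$ to $T$ yields \eqref{variazione-energia} for classical solutions.

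\textbf{Step 2 (mild solutions by density).} Now let $\mathbf{x}=(u_0,u_1)\in X$ give a mild solution $\mathbf{y}=(u,u_t)\in C([0,T],X)$. Since $D(A)$ is dense in $X$ (Proposition \ref{A-m-dissipativo}), choose $\mathbf{x}_n\in D(A)$ with $\mathbf{x}_n\to\mathbf{x}$ in $X$. By Proposition \ref{dip-con-dat-iniz}, for $T$ strictly less than $T(\mathbf{x})$ the corresponding classical solutions $\mathbf{y}_n=(u_n,\partial_t u_n)$ exist on $[0,T]$ for $n$ large and satisfy $\mathbf{y}_n\to\mathbf{y}$ in $C([0,T],X)$. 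By Step 1 each $\mathbf{y}_n$ satisfies
\begin{equation*}
\int_0^T\!\!\int_a^b g(u_n)(\partial_t u_n)^2\,dx\,dt=E[u_n,\partial_t u_n](0)-E[u_n,\partial_t u_n](T).
\end{equation*}
It remains to pass to the limit in each term. The convergence $u_n\to u$ in $C([0,T],H^1)$ and the continuous embedding $H^1\hookrightarrow L^\infty$ give that $u_n\to u$ uniformly on $[a,b]\times[0,T]$; together with the continuity of $F$ this yields convergence of $\int_a^b F(u_n(\cdot,t))\,dx$ to $\int_a^b F(u(\cdot,t))\,dx$ at both $t=0$ and $t=T$. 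The quadratic terms $\int_a^b u_x^2$ and $\int_a^b u_t^2$ pass to the limit because the maps $X\to\mathbb{R}$, $(u,v)\mapsto\|u_x\|_{L^2}^2,\,\|v\|_{L^2}^2$ are continuous and $\mathbf{y}_n(t)\to\mathbf{y}(t)$ in $X$ for $t\in\{0,T\}$.

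\textbf{Step 3 (dissipation term).} The main (mild) technical point is the double integral of $g(u_n)(\partial_t u_n)^2$. Uniform convergence $u_n\to u$ on $[a,b]\times[0,T]$ together with the local Lipschitz continuity of $g$ forces $g(u_n)\to g(u)$ uniformly, and in particular these functions are uniformly bounded. On the other hand $\partial_t u_n\to u_t$ in $C([0,T],L^2)$, hence in $L^2([0,T]\times[a,b])$, so $(\partial_t u_n)^2\to u_t^2$ in $L^1([0,T]\times[a,b])$ (write $a_n^2-a^2=(a_n-a)(a_n+a)$ and apply Cauchy--Schwarz in $L^2([0,T]\times[a,b])$, using the uniform $L^2$-bound on $\partial_t u_n$). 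Combining uniform convergence of $g(u_n)$ with $L^1$-convergence of $(\partial_t u_n)^2$ produces the convergence of the dissipation integral, and \eqref{variazione-energia} follows for the mild solution.

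The only genuinely delicate point is Step 3: both $g(u_n)$ and $(\partial_t u_n)^2$ move, so one must exploit that $u_n\to u$ converges in the stronger $L^\infty$-sense (via $H^1\hookrightarrow L^\infty$) to freeze the coefficient $g(u_n)$, leaving only a quadratic convergence in $L^2$ for the time derivative, which is exactly what the mild-solution regularity supplies.
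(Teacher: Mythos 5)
Your proposal is correct and follows essentially the same route as the paper: establish \eqref{variazione-energia} for classical solutions by multiplying \eqref{allen-cahn-iper-app} by $u_t$ and integrating by parts with the Neumann conditions, then extend to mild solutions by approximating the initial datum from $D(A)$ and invoking the continuous dependence of Proposition \ref{dip-con-dat-iniz}. The only difference is that you spell out the limit passage (uniform convergence of $g(u_n)$ and $F(u_n)$ via $H^1\hookrightarrow L^\infty$, and $L^1$-convergence of $(\partial_t u_n)^2$), details the paper leaves implicit in its ``by passing to the limit'' step.
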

\begin{proof}
Let $T>0$ and $\y=(u,u_t)$ a classical solution of \eqref{prob-Cauchy-classical}. Then $u(x,t)$ is a classical solution of \eqref{allen-cahn-iper-app} with boundary conditions \eqref{Neumann-bordo-app}; we multiply \eqref{allen-cahn-iper-app} by $u_t$ and integrate on $[a,b]\times[0,T]$:
$$\int_0^T\int_a^b\left(\tau u_tu_{tt}+g(u)u^2_t\right)dxdt=\int_0^T\int_a^b\left(\varepsilon^2 u_tu_{xx}+f(u)u_t\right)dxdt.$$
Using integration by parts and the boundary conditions \eqref{Neumann-bordo-app} we obtain
\begin{align*}
\int_0^T\int_a^b g(u)u^2_t dxdt &=\int_a^b\left[\frac\tau2 u^2_t(x,0)-\frac\tau2u^2_t(x,T)\right]dx\\
&+\int_a^b\left[\frac{\varepsilon^2}2 u^2_x(x,0)-\frac{\varepsilon^2}2 u^2_x(x,T)\right]dx\\
&+\int_a^b\left[F(u(x,0))-F(u(x,T))\right]dx. 
\end{align*}
Using the definition of energy \eqref{energy} we have \eqref{variazione-energia} for classical solution.\par
If $\x\in D(A)$ the solution is classical and \eqref{variazione-energia} holds.   If $\x\in X\backslash D(A)$, we consider $\x_n\in D(A)$ such that $\x_n\rightarrow \x$ in $X$. For the corresponding solution $\y_n=(u_n,(u_n)_t)$, \eqref{variazione-energia} is satisfied; for Proposition \ref{dip-con-dat-iniz}, by passing to the limit we obtain \eqref{variazione-energia} for $\y=(u,u_t)$.
\end{proof}
In the following, we consider mild solutions. If we assume that $g(u)\geq0$ for all $u\in\mathbb{R}$, then the energy defined by \eqref{energy} is a nonincreasing function of $t$ along the solutions of \eqref{allen-cahn-iper-app} with boundary conditions \eqref{Neumann-bordo-app}. This justifies the study of the equation \eqref{allen-cahn-iper-app} in the space $X$. Indeed, the energy is related to the $X$-norm and, as we will see in the next theorem, the energy dissipation allows us, under certain hypotheses on $f$, to obtain estimates for the solution in $X$ and so global existence for all $\x\in X$.
\begin{thm}\label{global existence}
We consider the equation \eqref{allen-cahn-iper-app} with boundary conditions \eqref{Neumann-bordo-app} and initial data \eqref{cond-iniz-app}. We suppose that $f,g$ are locally Lipschitz on $\mathbb{R}$,
\begin{equation}\label{hp-g}
g(s)\geq0, \qquad \quad \forall\,s\in\mathbb{R}
\end{equation}
and that there is $K>0$ such that for any $|x|>K$
\begin{equation}\label{hp-f}
F(x)\geq Cx^2, \qquad \quad \mbox{ for some }\, C\in\mathbb{R},
\end{equation}
where $\displaystyle F(x):=-\int_0^x f(s)ds$. Then, for any $(u_0,u_1)\in H^1\times L^2$ there exists a unique mild solution 
$$(u,u_t)\in C([0,\infty), H^1\times L^2).$$
\end{thm}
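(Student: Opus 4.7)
The plan is to combine the local well-posedness result (Theorem \ref{esist-unic-altern}) with the energy dissipation (Proposition \ref{prop-var-ener}) to exclude the blow-up alternative in \eqref{alternatives-T(x)}. Since $f$ and $g$ are locally Lipschitz, the nonlinear map $\Phi$ in \eqref{A-Fi} is Lipschitz on bounded sets of $X$ (as shown in the excerpt), so Theorem \ref{esist-unic-altern} furnishes a unique mild solution $\y=(u,u_t)\in C([0,T(\x)),X)$ for every $\x=(u_0,u_1)\in X$. Global existence will follow once we prove that, on every finite interval $[0,T]$ with $T<T(\x)$, the norm $\|(u,u_t)\|_X$ is a priori bounded; this contradicts alternative $(b)$ and forces $T(\x)=\infty$.

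First I would exploit $g\geq 0$ and Proposition \ref{prop-var-ener} to conclude that $E[u,u_t](t)\leq E[u_0,u_1]=:E_0$ for all $t\in[0,T(\x))$. Next I would translate the coercivity assumption \eqref{hp-f} into a global lower bound: since $F$ is continuous, it is bounded on $[-K,K]$, so there exists $M\geq 0$ such that
\begin{equation*}
F(x)\geq Cx^2-M\qquad \forall\,x\in\mathbb{R}.
\end{equation*}
Plugging this into the energy bound gives
\begin{equation*}
\frac{\tau}{2}\|u_t\|_{L^2}^2+\frac{\varepsilon^2}{2}\|u_x\|_{L^2}^2+C\|u\|_{L^2}^2\leq E_0+M(b-a)=:E_1.
\end{equation*}
If $C>0$ this immediately yields uniform bounds on $\|u_t\|_{L^2}$, $\|u_x\|_{L^2}$ and $\|u\|_{L^2}$, hence on $\|(u,u_t)\|_X$, and we are done.

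The main obstacle is the case $C\leq 0$, in which the energy alone does not control $\|u\|_{L^2}$; it only gives $\|u_t\|_{L^2}^2\leq C_1+C_2\|u\|_{L^2}^2$ and $\|u_x\|_{L^2}^2\leq C_1+C_2\|u\|_{L^2}^2$ for constants $C_1,C_2\geq 0$ depending on $E_0$, $M$, $\tau$, $\varepsilon$ and $|C|$. I would close the loop with a Gr\"onwall argument: for a classical solution,
\begin{equation*}
\frac{d}{dt}\|u\|_{L^2}^2=2\int_a^b u\,u_t\,dx\leq \|u\|_{L^2}^2+\|u_t\|_{L^2}^2\leq C_1+(1+C_2)\|u\|_{L^2}^2,
\end{equation*}
so $\|u(\cdot,t)\|_{L^2}^2\leq (\|u_0\|_{L^2}^2+C_1 t)e^{(1+C_2)t}$ on $[0,T]$. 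The identity extends to mild solutions by the approximation argument used in the proof of Proposition \ref{prop-var-ener}, invoking Proposition \ref{dip-con-dat-iniz}. Feeding the resulting bound back into the energy inequality controls $\|u_x\|_{L^2}$ and $\|u_t\|_{L^2}$ on $[0,T]$, hence $\|(u,u_t)\|_X$. This rules out alternative $(b)$ in \eqref{alternatives-T(x)} for every $T>0$, so $T(\x)=\infty$ and $(u,u_t)\in C([0,\infty),H^1\times L^2)$, establishing the theorem.
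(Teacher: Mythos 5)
Your proposal is correct and follows essentially the same route as the paper: rule out the blow-up alternative of Theorem \ref{esist-unic-altern} by combining the energy monotonicity from Proposition \ref{prop-var-ener} (using $g\geq 0$) with the coercivity \eqref{hp-f} of $F$ up to a quadratic, and close the estimate with a Gr\"onwall argument based on $\frac{d}{dt}\|u\|_{L^2}^2=2\int_a^b u\,u_t\,dx$, extended to mild solutions by approximation via Proposition \ref{dip-con-dat-iniz}. The only differences are cosmetic: you use the pointwise bound $F(x)\geq Cx^2-M$ and run Gr\"onwall on $\|u\|_{L^2}^2$ before feeding back into the energy, whereas the paper splits the integral over $\{|u|\leq K\}$ and $\{|u|>K\}$ and applies Gr\"onwall directly to $\psi(t)=\|(u,u_t)\|_X^2$, with the trivial case being $2C\geq\tau$ instead of $C>0$.
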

\begin{proof}
Local existence and uniqueness of mild solution follows from Theorem \ref{esist-unic-altern}. We show that $\|(u,u_t)\|_X$ does not tend to infinity as $t\uparrow T(u_0,u_1)$. Let
$$\psi(t)={\|(u(t),u_t(t))\|}^2_X=\int_a^b \left(\varepsilon^2 u^2_x+\tau u^2+\tau u^2_t\right)dx.$$ 
Thanks to the relation \eqref{variazione-energia} and the hypothesis \eqref{hp-g} on $g$ we have
$$E[u,u_t](t)\leq E[u_0,u_1], \qquad \qquad \forall\,t\in[0,T(u_0,u_1)).$$
It follows that
\begin{align*}
\psi(t)& =2E[u,u_t](t)-2\int_a^b F(u(x,t))dx+\int_a^b\tau u^2(x,t)dx  \\
& \leq 2E[u_0,u_1]-2\int_{\left\{|u|\leq K\right\}} F(u)dx-2\int_{\left\{|u|> K\right\}}F(u)dx+\int_a^b \tau u^2(x,t)dx\\ 
& \leq 2E[u_0,u_1]-2\int_{\left\{|u|\leq K\right\}}F(u)dx-2C\int_a^bu^2(x,t)dx \\
&+ 2C\int_{\left\{|u|\leq K\right\}}u^2(x,t)dx+\int_a^b \tau u^2(x,t)dx,
\end{align*}
for all $t\in[0,T(u_0,u_1))$ and so, there exists a constant $C_1\in\mathbb{R}$ such that
\begin{equation}\label{norma-limitata}
\psi(t)\leq C_1+\left(\tau-2C\right)\int_a^b u^2(x,t)dx,
\end{equation}
for all $t\in[0,T(u_0,u_1))$. If $2C\geq\tau$, we obtain $\|(u,u_t)\|_X\leq C_1$ for any $t\geq 0$. Otherwise, we have
\begin{equation}
\int_a^b\tau u^2dx=\int_a^b\tau u_0^2dx+2\int_0^t\int_a^b\tau uu_tdsdx\leq \tau{\|u_0\|}^2_{L^2}+\int_0^t\psi(s)ds. \label{norma-u-L2}
\end{equation}
Substituting \eqref{norma-u-L2} into \eqref{norma-limitata} we obtain
$$\psi(t)\leq C_1+(\tau-2C){\|u_0\|}^2_{L^2}+C_2\int_0^t\psi(s)ds,$$
for all $t\in[0,T(u_0,u_1))$. Applying Gronwall's Lemma, we have
$$\psi(t)\leq C_3e^{C_2t}, \qquad \qquad \forall\, t\in[0,T(u_0,u_1)),$$
and so $T(u_0,u_1)=\infty$.
\end{proof}
\end{appendices}

\vspace{0.5cm}
\textbf{Acknowledgements.} This work was done during my PhD at University of L'Aquila. It was strongly influenced by discussions with C. Lattanzio and C. Mascia. I am very grateful for their invaluable advice. I would also like to thank the referees for their helpful suggestions that have really improved this paper.

\bibliographystyle{plain}
\nocite{*}
\bibliography{slow-motion-hyp-allen-cahn-biblio-arxiv}
\end{document}